\numberwithin{equation}{section}
\definecolor{dgreen}{rgb}{0.1,0.7,0.1}
\definecolor{purple}{rgb}{0.49, 0.06, 0.51}
\def\N{\mathbb{N}}
\def\Z{\mathbb{Z}}
\def\la{\langle}
\def\ra{\rangle}
\def\s{\sigma}
\newcommand{\vt}{\vartheta}
\newcommand{\ox}{\otimes}
\newcommand{\x}{\times}
\newcommand{\sm}{\setminus}
\newcommand{\Nil}{\mathrm{Nil}}
\newcommand{\wt}{\widetilde}
\newcommand{\id}{\mathrm{id}}
\newcommand{\vf}{\varphi}
\newcommand{\ovl}{\overline}
\newcommand{\too}{\longrightarrow}
\newcommand{\bbar}{\overline{\phantom{x}}}
\DeclareMathOperator{\sign}{sign}
\DeclareMathOperator{\Sym}{Sym}
\DeclareMathOperator{\rk}{rk}
\DeclareMathOperator{\Int}{Int}
\DeclareMathOperator{\Trd}{Trd}
\DeclareMathOperator{\diag}{diag}
\DeclareMathOperator{\End}{End}
\DeclareMathOperator{\Herm}{\mathfrak{Herm}}
\renewcommand{\geq}{\geqslant}
\renewcommand{\leq}{\leqslant}
\renewcommand{\le}{\leqslant}
\newcommand{\df}{\emph}
\newcommand{\ve}{\varepsilon}
\newcommand{\ad}{\mathrm{ad}}
\newcommand{\tas}{T_{(A,\s)}}
\newcommand{\tasu}{T_{(A,\s,u)}}
\newcommand{\das}{D_{(A,\s)}}
\newcommand{\sas}{\Sym(A,\s)}
\newcommand{\pf}[1]{\langle\!\langle #1\rangle\!\rangle}
\newcommand{\qf}[1]{\langle #1\rangle}
\newcommand{\qnd}[1]{\langle #1\rangle^{\ns}}
\newcommand{\ns}{\mathrm{ns}}
\renewcommand{\iff}{\Leftrightarrow}
\newtheorem{thm}{Theorem}[section]
\newtheorem{prop}[thm]{Proposition}
\newtheorem{cor}[thm]{Corollary}
\newtheorem{lemma}[thm]{Lemma}
\theoremstyle{definition}
\newtheorem{defi}[thm]{Definition}
\newtheorem{remark}[thm]{Remark}
\begin{document}
\title{Signatures of hermitian forms, positivity, and an answer to a question of Procesi and Schacher}
\author{Vincent Astier and Thomas Unger}

\date{}
\maketitle

\begin{abstract}
Using the theory of signatures of hermitian forms over algebras with involution, developed by us in earlier work, we 
introduce a notion of positivity for symmetric elements and prove a noncommutative analogue of Artin's solution to 
Hilbert's 17th problem, characterizing totally positive elements in terms of weighted sums of hermitian squares. As a consequence 
we obtain an earlier result of  Procesi and Schacher  and give a complete answer to their question about 
representation of elements as sums of hermitian squares.
\smallskip

\noindent\textbf{Key words.} Central simple algebra,
involution, formally real field, hermitian form, signature, positivity, sum of hermitian squares   
\smallskip

\noindent\textbf{2010 MSC.} 16K20, 11E39, 13J30

\end{abstract}


\section{Introduction}

We use the theory of signatures of hermitian forms,
a tool  we developed and studied in \cite{A-U-Kneb} and \cite{A-U-prime}, 
to introduce a natural notion of positivity for symmetric
elements in an algebra with involution, inspired by the theory of quadratic forms; signatures of one-dimensional 
hermitian forms over algebras with an involution can take values outside of $\{-1,1\}$ and it is therefore natural to single out those symmetric elements whose associated hermitian form has maximal signature at a given ordering. We call such elements maximal at the ordering
and characterize the elements that are maximal at all orderings in terms of weighted
sums of hermitian squares, thus obtaining an analogue of Artin's solution to Hilbert's 17th problem for algebras with 
involution, cf. Section~\ref{sec:max}.   
The proof is obtained via signatures, allowing us to use the hermitian version of Pfister's local-global principle. 
This provides a short and conceptual argument, based on torsion in the Witt group.

Procesi and Schacher \cite{P-S} already considered  such a noncommutative version of Artin's theorem in this context,  using a notion of positivity based on involution trace forms   which goes back to Weil \cite{Weil}.
They showed  that every totally positive element (in their sense) in an algebra with involution is a sum of 
squares of symmetric elements, and thus of hermitian squares, with weights, cf. \cite[Theorem~5.4]{P-S}. 
They also asked  if these weights could be removed \cite[p.~404]{P-S}. 
The answer to this question is in general no, as shown in \cite{K-U}.

Our approach via signatures makes it possible to obtain the sum of hermitian squares version of their theorem as a consequence of  Theorem~\ref{main_thm_2}. It also allows us to single out the set of orderings relevant to their question (the non-nil orderings) and to   rephrase it in a natural way, which can then be fully answered (Theorem~\ref{biscuit}).

\section{Algebras with involution and signatures of hermitian forms}

We present the notation and main tools  used in this paper and refer to the standard references 
\cite{Knus}, \cite{BOI}, \cite{Lam} and \cite{Sch} as well as \cite{A-U-Kneb} and \cite{A-U-prime}
for the details.

\subsection{Algebras with involution, hermitian forms}

For a ring $A$, an involution $\s$ on $A$ and $\ve\in\{-1,1\}$, we denote
the set of $\ve$-symmetric elements of $A$ with respect to $\s$ by $\Sym_\ve(A,\s) =\{a\in A\mid \s(a)=\ve a\}$. We also denote the set of invertible elements of $A$  by $A^\x$ and let 
 $\Sym_\ve(A,\s)^\x:= \Sym_\ve(A,\s) \cap A^\x$.

Let $F$ be a field of characteristic different from $2$. We denote by $W(F)$ the Witt ring of $F$, by $X_F$  the space of orderings of $F$,
and by $F_P$ a real closure of $F$ at an ordering $P\in X_F$. 
We allow for the possibility that $F$ is not formally real, i.e. that $X_F=\varnothing$.
By an 
\emph{$F$-algebra with involution} we mean a pair $(A,\s)$ where $A$ is a 
finite-dimensional simple $F$-algebra with centre a field  $K$, equipped with an involution  $\s:A\to A$, such that $F = K \cap \Sym(A,\s)$.  Observe that $\dim_F K 
\le 2$.
We say that 
$\s$ is \df{of the first kind} if $K=F$ and \df{of the second kind} otherwise. 
We let $\iota=\s|_{K}$ and note that $\iota =\id_F$ if $\s$ is of the first kind.
If $A$ is a division algebra, we call $(A,\s)$ an \df{$F$-division algebra with involution}.

Let $(A, \s)$ be an $F$-algebra with involution.
It follows from the structure theory of $F$-algebras with involution  that $A$ is isomorphic to a full matrix algebra $M_\ell(D)$ for a unique 
$\ell\in \N$ and an $F$-division algebra $D$
(unique up to isomorphism) which is equipped with an involution $\vt$ of the same kind as $\s$, cf. \cite[Thm.~3.1]{BOI}. 
For $B=(b_{ij})\in M_\ell(D)$ we let $\vt^t(B)=(\vt(b_{ji}))$.
We denote Brauer equivalence by~$\sim$, isomorphism by $\cong$ and isometry of forms by $\simeq$.

For $\ve\in\{-1,1\}$ we write $W_\ve(A,\s)$ for the \df{Witt group} of Witt equivalence classes of nonsingular $\ve$-hermitian forms, defined on 
finitely generated right $A$-modules. Note that $W_\ve(A,\s)$ is a $W(F)$-module.
For a nonsingular $\ve$-hermitian form $h$ over $(A,\s)$ the notation $h\in W_\ve(A,\s)$ signifies that $h$ is identified with its Witt class in $W_\ve(A,\s)$.

For $a_1, \ldots, a_k \in F$ the notation $\qf{a_1,\ldots, a_k}$ stands for the quadratic form $(x_1,\ldots, x_k) \in F^k \mapsto  \sum_{i=1}^k a_i x_i^2 \in F$, as usual, whereas for $a_1, \ldots, a_k$ in $\Sym_\ve(A,\s)$ the notation $\qf{a_1,\ldots, a_k}_\s$ stands for the diagonal $\ve$-hermitian form  
\[ \bigl(  (x_1,\ldots, x_k), (y_1,\ldots, y_k)  \bigr) \in A^k \x A^k \mapsto   \sum_{i=1}^k \s(x_i) a_i y_i \in A.\]
In each case, we call $k$ the \emph{dimension} of the form. 

In this paper, we are mostly interested in hermitian forms ($\ve=1$) and only occasionally in skew-hermitian forms ($
\ve=-1$). When $\ve=1$, we write $\Sym(A,\s)$  and  $W(A,\s)$ instead of $\Sym_{1}(A,\s)$  and  $W_{1}(A,\s)$, respectively.

Let $h:M\x M\to A$ be a hermitian form over $(A,\s)$. We sometimes write $(M,h)$ instead of $h$.
The \emph{rank} of $h$, $\rk(h)$, is the rank of the $A$-module $M$.
The set of elements represented by $h$ is denoted by
\[\das(h): =\{ u \in \Sym(A,\s) \mid  \exists x\in M\text{ such that } h(x,x)=u\}.\]

We denote by $\Int(u)$ the inner automorphism determined by $u \in A^\x$, where  $\Int(u)(x):= uxu^{-1}$ for $x\in A$. 

\begin{remark}
If  $F$ is not formally real, many results in this paper are trivially true since $W(A,\s)$ is torsion in this case (see \cite[Theorem~4.1]{LU1} and note that this theorem, being a reformulation of \cite[Theorem~3.2]{LU1}, is actually valid for any field of characteristic not~$2$).
\end{remark}

\subsection{Morita theory}\label{Morita} 

For the remainder of the paper we fix some   field $F$ of characteristic not $2$ and some $F$-algebra with involution $(A,\s)$, 
where $\dim_K A =m=n^2$ and $A\cong M_\ell(D)$ for some $F$-division algebra $D$ which is equipped with an involution $\vt$ of the same kind as $\s$. Recall that the integer $n$ is called the \emph{degree} of $A$, $\deg A$.

By \cite[4.A]{BOI}, there exists $\ve \in \{-1,1\}$ and an invertible matrix $\Phi \in M_\ell(D)$ such that 
$\vt(\Phi)^t=\ve \Phi$ and 
$(A,\s)\cong (M_{\ell}(D), \ad_\Phi)$,
where $\ad_\Phi = \Int(\Phi) \circ \vt^t$. (In fact, $\Phi$ is the Gram matrix of an $\ve$-hermitian form over $(D,\vt)$.)
Note that $\ad_\Phi=\ad_{\lambda\Phi}$ for all $\lambda \in F^\x$ and that $\ve=1$ when $\s$ and $\vt$ are of the same type.  
We fix an isomorphism of $F$-algebras with involution
 $f:(A,\s)\to (M_{\ell}(D), \ad_\Phi)$.

\begin{lemma}\label{eps} 
We may choose $\vt$ above such that $\ve=1$, except when $A\cong M_\ell(F)$ with $\ell$ even and $\s$ symplectic, in 
which case $(D, \vt, \ve)= (F, \id_F, -1)$. 
\end{lemma}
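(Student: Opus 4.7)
The plan is to case-split on the kind of $\s$, and in each case show how to modify $\vt$ and/or $\Phi$ so that $\ve = +1$.

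For $\s$ of the second kind, $\iota$ is a nontrivial automorphism of $K$, so I may pick $\lambda \in K^\x$ with $\iota(\lambda) = -\lambda$. Since $\lambda$ lies in the centre $K$ of $A$, we have $\ad_{\lambda\Phi} = \ad_\Phi$; and a short entrywise computation gives $\vt^t(\lambda\Phi) = \iota(\lambda)\,\vt^t(\Phi) = -\ve\,(\lambda\Phi)$. Replacing $\Phi$ by $\lambda\Phi$ therefore flips the sign of $\ve$ without changing $\vt$ or the involution $\ad_\Phi$, which settles the second-kind case regardless of $D$.

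For $\s$ of the first kind, rescaling $\Phi$ by $F^\x$ cannot flip $\ve$ (since $\vt|_F = \id_F$), so I would instead modify $\vt$ itself. Given any $d \in D^\x$ with $\vt(d) = -d$, the map $\vt' := \Int(d) \circ \vt$ is again an involution on $D$ of the first kind. Setting $\Phi' := (\Phi_{ij}\,d^{-1})_{i,j} \in M_\ell(D)$ and doing a direct calculation, one checks that $\Int(\Phi')\circ(\vt')^t = \Int(\Phi)\circ\vt^t$ as involutions of $M_\ell(D)$, while $(\vt')^t(\Phi') = -\ve\,\Phi'$. So a single skew-symmetric invertible element $d$ converts the pair $(\vt,\Phi)$ into a new pair representing the same $\s$ but with opposite sign, switching $\ve=-1$ to $\ve=+1$.

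It remains to decide when such a $d$ exists. Because $D$ is a division algebra, every nonzero element is invertible, so the question reduces to whether $\Sym_{-1}(D,\vt)\neq 0$. If this space were zero then $\vt = \id_D$, forcing $D$ commutative and hence $D = F$. Thus whenever $D \neq F$ we can arrange $\ve = +1$. The only remaining possibility is $D = F$ in the first-kind case, where necessarily $\vt = \id_F$ and $\Phi \in \mathrm{GL}_\ell(F)$ satisfies $\Phi^t = \ve\,\Phi$: if $\s$ is orthogonal then $\ve = +1$ automatically, while if $\s$ is symplectic then $\ve = -1$ and skew-symmetry of an invertible matrix forces $\ell$ even. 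This is exactly the exceptional case of the lemma. The main obstacle is the bookkeeping in the first-kind calculation, where one must carefully track how the entrywise action of $\vt^t$ interacts with left- and right-multiplication by $d$ and by matrix entries of $\Phi$.
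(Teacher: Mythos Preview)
Your proof is correct. In the second-kind case it coincides with the paper's: both multiply $\Phi$ by a central element $\lambda\in K^\times$ with $\iota(\lambda)=-\lambda$ (the paper writes this as passing to $\Int(\lambda)\circ\vt$, but since $\lambda$ is central that is just $\vt$ again).

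In the first-kind case your route differs from the paper's. The paper invokes the structural dichotomy from \cite[Corollary~2.8]{BOI}: if $\deg D$ is even then $D$ carries involutions of both types, so one may simply start over and pick $\vt$ of the same type as $\s$, forcing $\ve=1$; if $\deg D$ is odd then the existence of a first-kind involution forces $D=F$, leading to the exceptional case. You instead keep the given data and explicitly switch the type of $\vt$ via $\vt'=\Int(d)\circ\vt$ for a skew unit $d$, recomputing $\Phi'=\Phi\,\diag(d^{-1},\ldots,d^{-1})$ and checking $(\vt')^t(\Phi')=-\ve\Phi'$. Your existence argument for $d$ (namely $\Sym_{-1}(D,\vt)=0\Rightarrow\vt=\id_D\Rightarrow D$ commutative $\Rightarrow D=F$) is a direct substitute for the even/odd degree split. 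The paper's version is cleaner to state but leans on a citation; yours is self-contained at the cost of the matrix bookkeeping you flag. Either way the residual case $D=F$, $\vt=\id_F$ is handled identically.
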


\begin{proof} We consider all possible cases, with reference to \cite[Corollary~2.8]{BOI} for involutions of the first kind.

Case 1:  $\s$, and thus $\vt$, of the second kind. In this case, if $\ve=-1$,  let $u\in  K^\x$ be such that $\vt(u)=-u$ and replace $\vt$ by $\Int(u)\circ \vt$ and $\Phi$ by $u\Phi$.

Case 2: $\s$, and thus $\vt$, of the first kind and $\deg D$ even. Then $D$ can be equipped with both orthogonal and symplectic involutions and so we may choose     $\vt$ to be of the same type as $\s$ so that  
$\vt(\Phi)^t= \Phi$.

Case 3: $\s$, and thus $\vt$, of the first kind, $\deg D$ odd and $\deg A$ also odd. In this case, $D=F$, $\vt=\id_F$, 
$A$ is split (i.e. $A\sim F$) and $\s$ must be orthogonal. Thus $\ve=1$ since  $\vt$ and $\s$ are both orthogonal.

Case 4: $\s$, and thus $\vt$, of the first kind, $\deg D$ odd and $\deg A$ even. In this case, $D=F$, $\vt=\id_F$ and $A$ is split. If $\s$ is orthogonal, then $\ve=1$ since  $\vt$ and $\s$ are both orthogonal. If $\s$ is symplectic, then 
$\ve=-1$.
\end{proof}

Given an $F$-algebra with involution $(B,\tau)$ we denote by 
$\Herm_\ve(B,\tau)$ the category of $\ve$-hermitian 
 forms over $(B, \tau)$ (possibly singular), cf. \cite[p.~12]{Knus}. The isomorphism $f$ trivially induces an equivalence of categories
 $f_*: \Herm(A,\s) \too \Herm(M_{\ell}(D), \ad_\Phi)$. Furthermore, the $F$-algebras with involution $(A,\s)$ and
 $(D,\vt)$ are Morita equivalent, cf. \cite[Chapter~I, Theorem~9.3.5]{Knus}. In this paper we make repeated use of a particular Morita 
 equivalence  between $(A,\s)$ and $(D,\vt)$, following the  approach in \cite{LU2} (see also \cite[\S2.4]{A-U-Kneb} 
 for the case of nonsingular forms and \cite[Proposition~3.4]{A-U-Kneb} for a justification of why
 using this equivalence is  as good as using any other equivalence), namely:
\begin{equation}\label{diagram}
\xymatrix{
\Herm(A,\s)\ar[r]^--{f_*} &    \Herm(M_{\ell}(D),\ad_\Phi)\ar[r]^--{s}  &  \Herm_\ve(M_\ell(D), \vt^t) 
\ar[r]^--{g} & \Herm_\ve(D,\vt),}
\end{equation}
where  $s$ is the \emph{scaling by $\Phi^{-1}$} Morita equivalence, given by $(M,h)\mapsto (M, \Phi^{-1}h)$  
and
$g$ is the \emph{collapsing} Morita equivalence, given by $(M,h)\mapsto (D^k,b)$, 
where $k$ is the rank of $M$ as $M_\ell(D)$-module. Under the isomorphism $M \cong (D^\ell)^k$, $h$ can be identified with the form  $(M_{k,\ell}(D), \qf{B}_{\vt^t})$ for some matrix $B\in M_k(D)$ that satisfies $\vt^t(B)=\ve B$ and we take 
for $b$ the $\ve$-hermitian form whose Gram matrix is $B$. Note that $\qf{B}_{\vt^t}(X,Y):= \vt(X)^t B Y$ for all $X,Y \in M_{k,\ell}(D)$.

\subsection{Signatures of hermitian forms}\label{sec:sign}

We defined signatures of nonsingular hermitian forms over $(A,\s)$ 
 in \cite{A-U-Kneb}, inspired by \cite{BP2}, and gave a more concise presentation  in \cite[\S 2]{A-U-prime}, which we will follow in this section and to which we refer for the details. (We called them $H$-signatures in  \cite{A-U-Kneb} and 
 \cite{A-U-prime} to differentiate them from the signatures in  \cite{BP2}.)

Let $P\in X_F$ and
consider  the sequence of group morphisms (cf. \cite[Diagram~(1)]{A-U-prime})
\begin{equation}\label{seq}
\xymatrix{
W(A,\s) \ar[r]^--{r_P} & W(A\ox_F F_P, \s\ox\id) \ar[r]^--{\mu_P}_--{\cong} & W_{\ve_P}(D_P,\vt_P)\ar[r]^--{\sign_P}  & \Z,
}
\end{equation}
where $r_P$ is induced by the canonical extension of scalars map, $A\ox_F F_P$ is a matrix algebra over $D_P$,
$\vt_P$ is an involution on $D_P$, $\mu_P$ is an isomorphism induced by Morita equivalence (for example, the isomorphism induced by 
\eqref{diagram} with $(A\ox_F F_P, \s\ox\id)$ in the role of $(A,\s)$)
and  $\sign_P$ is zero if $\ve_P=-1$ and the 
Sylvester signature  at the unique ordering of $F_P$, otherwise (in which case $(D_P,\vt_P)$ is one of 
$(F_P, \id_{F_P})$, $(F_P(\sqrt{-1}), \bbar)$ or $((-1,-1)_{F_P}, \bbar)$, where $\bbar$ denotes conjugation).

Diagram~\eqref{seq} defines a morphism of groups $s_{\mu_P}: W(A,\s) \to \Z$. The map $\mu_P$ is not canonical
and a different choice may at most result in multiplying $s_{\mu_P}$ by $-1$. We define the set of 
\emph{nil-orderings} of $(A,\s)$ as follows:
\[\Nil[A,\s]:=\{ P\in X_F \mid s_{\mu_P} = 0 \}\]
and note that it does not depend on the choice of $\mu_P$, but only on the Brauer class of $A$ and the type of $\s$. 
For convenience we also introduce 
\[\wt X_F:=X_F \sm \Nil[A,\s],\]
which does not indicate the dependence on $(A,\s)$ in order to avoid cumbersome notation.

Given $P \in X_F$, we define $\sign_P^\eta$, 
the \emph{signature} at $P$ of  nonsingular hermitian forms  over $(A,\s)$,
  as follows (see also \cite{A-U-Kneb} and \cite{A-U-prime}):
\begin{enumerate}[(i)]
\item if $P\in \Nil[A,\s]$, we let $\sign_P^\eta  =0$;
\item if $P \in \wt X_F$, $\sign_P^\eta$ will be either $s_{\mu_P}$ or $-s_{\mu_P}$.  In \cite[Theorem~6.4]{A-U-Kneb} we proved that there exists a finite
tuple  $\eta=(\eta_1,\ldots, \eta_t)$ of nonsingular hermitian forms (which can all be chosen to be diagonal of dimension $1$) such that for every $Q \in \wt X_F$, $s_{\mu_Q} ( \eta) \not = (0,\ldots,0)$. 
Using $\eta$ as provided by this theorem,  let $i$ be the least integer such that $ s_{\mu_P} (\eta_i)\not=0$. We choose $\sign_P^\eta \in \{-s_{\mu_P}, s_{\mu_P}\}$ such that $\sign_P^\eta \eta_i >0$.
\end{enumerate}
In \cite[Proposition~3.2]{A-U-prime} we showed that the tuple $\eta$ (called a \emph{tuple of reference forms for 
$(A,\s)$})
can be replaced by a single diagonal hermitian form (called a \emph{reference form for $(A,\s)$}) 
which may have dimension greater than one. 

\begin{remark}\label{ricola}
If $\eta=(\eta_1,\ldots, \eta_t)$ is a tuple of reference forms for $(A,\s)$, then $\eta'=(\qf{1}_\s, \eta_1,\ldots, \eta_t)$ is also a tuple of reference forms, with the property that if $s_{\mu_P}\qf{1}_\s\not=0$, then $\sign_P^{\eta'} \qf{1}_\s >0$.
More generally, for every hermitian form $\eta_0$ over $(A,\s)$, the tuple $(\eta_0, \eta_1,\ldots, \eta_t)$ will also be a tuple of reference forms.
\end{remark}

\begin{remark} Let $(A,\s)$ and $(B,\tau)$ be Morita equivalent $F$-algebras with involution. Denoting this equivalence by 
$\mu$ and letting $\eta=(\eta_1,\ldots, \eta_t)$ be a tuple of reference forms for $(A,\s)$, it follows from \cite[Theorem~4.2]{A-U-prime} that $(\mu(\eta_1), \ldots, \mu(\eta_t))$ is a tuple of reference forms for  $(B,\tau)$.
\end{remark}

\begin{lemma}\label{rescue}
If $(D,\vt,\ve)=(F, \id_F, -1)$, then $\wt X_F=\varnothing$. 
\end{lemma}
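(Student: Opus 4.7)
The plan is to show that under the hypothesis $(D,\vt,\ve)=(F,\id_F,-1)$, the group morphism $s_{\mu_P}$ in sequence~\eqref{seq} vanishes for every $P\in X_F$, i.e.\ $\Nil[A,\s]=X_F$.

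First I would invoke Lemma~\ref{eps} to translate the hypothesis into concrete structural information about $(A,\s)$: the triple $(F,\id_F,-1)$ arises exactly in the case $A\cong M_\ell(F)$ with $\ell$ even and $\s$ symplectic. Fixing any $P\in X_F$, I would then consider the scalar extension $(A\ox_F F_P,\s\ox\id)$ appearing in the middle of \eqref{seq}. Since $A$ is already split over $F$, the base-changed algebra is $M_\ell(F_P)$, and the involution $\s\ox\id$ remains symplectic (the type of an involution of the first kind is preserved under extension of scalars since it is detected by the dimension of the space of symmetric elements).

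Next I would apply Lemma~\ref{eps} to the $F_P$-algebra with involution $(A\ox_F F_P,\s\ox\id)$: being a split symplectic algebra with $\ell$ even, the associated triple $(D_P,\vt_P,\ve_P)$ forced by Lemma~\ref{eps} is precisely $(F_P,\id_{F_P},-1)$. Thus the third term of \eqref{seq} is $W_{-1}(F_P,\id_{F_P})$, the Witt group of nonsingular skew-symmetric bilinear forms over $F_P$. By the very definition of $\sign_P$ recalled immediately after \eqref{seq}, $\sign_P=0$ whenever $\ve_P=-1$, so the composition $s_{\mu_P}$ is the zero map.

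Since this holds for an arbitrary $P\in X_F$, every ordering of $F$ lies in $\Nil[A,\s]$, giving $\wt X_F=X_F\sm\Nil[A,\s]=\varnothing$. The only slightly delicate point is step two, namely the observation that the type of a first-kind involution is preserved under scalar extension, so that Lemma~\ref{eps} applied over $F_P$ returns the same $\ve_P=-1$; everything else is a direct reading of the definitions.
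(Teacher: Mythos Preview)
Your proof is correct and follows essentially the same approach as the paper: both arguments show that $\ve_P=-1$ for every $P\in X_F$ by recognizing $(A,\s)$ as split symplectic and carrying this to $F_P$. The only cosmetic difference is that the paper works directly with the skew-symmetric matrix $\Phi$ (so that $(A\ox_F F_P,\s\ox\id)\cong (M_\ell(F_P),\ad_{\Phi\ox\id})$ with $\Phi\ox\id$ still skew-symmetric, whence $W(M_\ell(F_P),\ad_{\Phi\ox\id})\cong W_{-1}(F_P,\id_{F_P})$), whereas you phrase the same reduction via the type of the involution and a second application of Lemma~\ref{eps}.
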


\begin{proof} Using the notation from Section~\ref{Morita}, we have $(A,\s) \cong (M_\ell(F), \ad_\Phi)$, where $\Phi$ is a skew-symmetric matrix over $F$. Let $P\in X_F$. Then $(A \ox_F F_P ,\s\ox \id) \cong (M_\ell(F_P), \ad_{\Phi \ox \id})$ and so
$W(M_\ell(F_P), \ad_{\Phi \ox \id}) \cong W_{-1}(F_P, \id_{F_P})$ by \eqref{seq}. It follows that $\ve_P=-1$ 
in \eqref{seq} and so $P \in \Nil[A,\s]$.
\end{proof}

Use of the notation $\sign_P^\eta h$  assumes that $\eta$ is some tuple of reference forms for $(A,\s)$ 
and that $h$ is a nonsingular hermitian form over $(A,\s)$. Also, if $F$ has only one ordering $P$, we write $\sign^\eta$ instead
of $\sign_P^\eta$.

\subsection{The nonsingular part of a hermitian form}\label{nons}

Let $u$ be an  element in $\Sym(A,\s)$, not necessarily invertible. In the next sections we examine the ``positivity'' of $u$
and its relation to sums of hermitian squares
 in terms of the associated hermitian form $\qf{u}_\s$ over $(A,\s)$, which may be singular. The properties that we are interested in only depend on the nonsingular part of $\qf{u}_\s$, which motivates the remainder of this section.

We start with two lemmas, corresponding to \cite[Chapter~I, Lemma~6.2.3]{Knus} and \cite[Chapter~I, Proposition~6.2.4]{Knus}, but stated for 
possibly singular $\ve$-hermitian forms.  

\begin{lemma}\label{knus} 
Let $(D,\vt)$ be an $F$-division algebra with involution and let $(M,h)$ be an $\ve$-hermitian form over $(D,\vt)$, where $\ve \in \{-1,1\}$.  Assume that $h(x,x)=0$ for all $x\in M$. Then 
\[h=0 \quad \text { or }\quad  (D,\vt, \ve)=(F,\id_F, -1).\]
\end{lemma}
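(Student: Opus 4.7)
The plan is to polarize the identity $h(x,x)=0$ to extract a pointwise constraint on the values of $h$, and then use that $M$ is a $D$-module and $D$ a division ring to show the image of $h$ is all of $D$, which forces $\vt=\id$ and $\ve=-1$.

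First I would compute $h(x+y,x+y)$, use bi-additivity and the assumptions $h(x,x)=h(y,y)=0$, together with the $\ve$-hermitian symmetry $h(y,x)=\ve\vt(h(x,y))$, to conclude
\[
\vt(h(x,y)) = -\ve\, h(x,y) \quad \text{for all } x,y\in M. \tag{$\ast$}
\]

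Next, suppose $h\neq 0$ and pick $x_0,y_0\in M$ with $a:=h(x_0,y_0)\neq 0$. Since $h$ is $D$-linear in the second argument, $h(x_0,y_0 d)=a d$ for every $d\in D$. As $D$ is a division ring and $a\neq 0$, the map $d\mapsto ad$ is a bijection $D\to D$, so the image of $h$ contains all of $D$. Applying $(\ast)$ with $h(x,y)=1$ gives $\vt(1)=-\ve$, hence $\ve=-1$. Applying $(\ast)$ to arbitrary $d\in D$ (which we just showed lies in the image of $h$) gives $\vt(d)=d$ for every $d\in D$, i.e.\ $\vt=\id_D$.

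Finally, $\vt$ being an anti-automorphism equal to $\id_D$ forces $D$ to be commutative, so $D$ equals its centre $K$. The defining condition $F=K\cap\Sym(D,\vt)=K$ for an $F$-division algebra with involution then yields $D=F$ and $\vt=\id_F$. Together with $\ve=-1$ this gives $(D,\vt,\ve)=(F,\id_F,-1)$, as required.

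There is no real obstacle here; the only point that needs care is the sesquilinearity convention (the paper puts the $D$-linearity on the right via $\qf{\cdot}_\s$), which determines how one scales to put an arbitrary element of $D$ into the image of $h$. Everything else is formal manipulation.
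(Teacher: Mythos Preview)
Your proof is correct and follows essentially the same route as the paper's: show that when $h\neq 0$ every element of $D$ arises as a value $h(x,y)$, then use the polarization identity $\vt(h(x,y))=-\ve\,h(x,y)$ to force $\vt=\id_D$ and $\ve=-1$, hence $D=F$. The paper does exactly this but delegates the second half to the proof of \cite[Chapter~I, Lemma~6.2.3]{Knus}; you have simply written those steps out explicitly.
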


\begin{proof} Assume $h\not=0$ and let $x,z\in M$ be such that $h(x,z)=\alpha\not=0$. Let $d\in D^\x$ and let 
$y=z\alpha^{-1} d$. Then $h(x,y)=d$, and the proof proceeds as in the proof of \cite[Chapter~I, Lemma~6.2.3]{Knus}: assuming that
$\vt$ is nontrivial, we reach a contradiction and the rest of the lemma follows.
\end{proof}

\begin{lemma}\label{knus2}  
Let $(D,\vt)$ be an $F$-division algebra with involution and let $(M,h)$ be an $\ve$-hermitian form over $(D,\vt)$, where $\ve \in \{-1,1\}$. Assume that the Gram matrix of $h$ is $H$. Then there exists an invertible matrix $G \in M_\ell(D)$
such that 
\[ \vt(G)^t H G = \diag( u_1,\ldots, u_{k}, 0,\ldots,0),\]
where $u_1,\ldots, u_k \in \Sym(D,\vt)^\x$, except when $(D,\vt, \ve)=(F,\id_F, -1)$, in which case they are elements of
$\Sym_{-1}(M_2(F), {}^t)^{\x}$. 
\end{lemma}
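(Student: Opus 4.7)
\medskip
\noindent\textbf{Plan.} The strategy is to isolate the radical of $h$ first, reducing the problem to diagonalizing a nonsingular $\ve$-hermitian form, and then to split into two cases according to Lemma~\ref{knus}.

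\medskip
\noindent\textbf{Step 1 (Splitting off the radical).} Since $D$ is a division ring, $M$ is a free $D$-module and the radical
\[
R := \{x\in M\mid h(x,y)=0 \text{ for all } y\in M\}
\]
is a direct summand. I would pick a basis $(x_1,\dots,x_k)$ of a complement $M'$ of $R$ followed by a basis $(x_{k+1},\dots,x_\ell)$ of $R$. In this basis the Gram matrix is block-diagonal with a $k\times k$ upper-left block $H'$ and zeros elsewhere. Writing $G_0$ for the corresponding change-of-basis matrix gives $\vt(G_0)^t H G_0 = \diag(H',0,\dots,0)$, and $H'$ is nonsingular because $\mathrm{rad}(h|_{M'})\subseteq R\cap M'=0$. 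So the problem reduces to diagonalizing the nonsingular form with Gram matrix $H'$.

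\medskip
\noindent\textbf{Step 2 (Diagonalization, non-exceptional case).} Suppose $(D,\vt,\ve)\neq(F,\id_F,-1)$. Since $h|_{M'}$ is nonsingular of positive rank it is in particular nonzero, so Lemma~\ref{knus} produces $x\in M'$ with $h(x,x)=u_1\neq 0$; because $D$ is a division algebra, $u_1\in D^\x$, and by construction $u_1\in\Sym_\ve(D,\vt)^\x$ (which matches the statement when $\ve=1$). The line $Dx$ is therefore nondegenerate, so $M' = Dx \perp (Dx)^\perp$ with $h|_{(Dx)^\perp}$ again nonsingular of rank $k-1$. Induction on $k$ then diagonalizes $(Dx)^\perp$, assembling into a matrix $G_1\in M_k(D)^\x$ with $\vt(G_1)^t H' G_1 = \diag(u_1,\dots,u_k)$.

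\medskip
\noindent\textbf{Step 3 (Exceptional case and assembly).} In the case $(D,\vt,\ve)=(F,\id_F,-1)$ the form $h|_{M'}$ is a nondegenerate alternating bilinear form over $F$, and the classical symplectic basis theorem provides a basis in which its Gram matrix is block-diagonal with $k=\dim_F M'/2$ blocks, each an invertible element of $\Sym_{-1}(M_2(F),{}^t)$; this furnishes the analogue of $G_1$ in this case. Finally, embedding $G_1$ (or the symplectic change-of-basis) as the top-left block of an $\ell\times\ell$ matrix with the identity on the last $\ell-k$ coordinates, and composing with $G_0$, yields the required invertible $G\in M_\ell(D)$. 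The main thing to watch is Step~2's invocation of Lemma~\ref{knus}: one needs that ``nonzero'' upgrades to ``invertible'' (immediate since $D$ is a division algebra) and that the $1$-dimensional subspace $Dx$ is nondegenerate so that its orthogonal complement is again nonsingular, which lets the induction run.
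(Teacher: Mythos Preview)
Your argument is correct and essentially matches the paper's: both use Lemma~\ref{knus} to find an anisotropic vector, split off the resulting nondegenerate line, and induct, with the exceptional case $(F,\id_F,-1)$ handled by the classical symplectic basis theorem. The only organizational difference is that you first peel off the radical and then induct on the nonsingular part, whereas the paper runs the induction directly on $h$ (the base case being $h=0$), so that the zero blocks appear automatically at the end; both routes are equally short. One tiny slip: the paper works with right $D$-modules, so the line should be written $xD$ rather than $Dx$.
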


\begin{proof} Assume first that $(D,\vt,\ve)\not=(F,\id_F, -1)$. If $h=0$, there is nothing to prove. Otherwise, 
there exists $x\in M$ such that $h(x,x)\not=0$, by Lemma~\ref{knus}.  
Then $M= xD \oplus (xD)^\perp$ and the result follows by induction.

Finally, if $(D,\vt,\ve)=(F,\id_F, -1)$, the result is well-known. 
\end{proof}

Let $(A,\s)$ be an $F$-algebra with involution and fix an isomorphism $f: (A,\s) \to (M_\ell(D), \Int(\Phi)\circ \vt^t)$ as at the
start of Section~\ref{Morita}. 
Let $u \in \Sym(A,\s)$. Since $\Phi^{-1} f(u) \in \Sym_\ve(M_\ell(D),  \vt^t)$, it is the Gram matrix of an $\ve$-hermitian form over $(D,\vt)$ and thus,  by Lemma~\ref{knus2}, 
there exists an invertible matrix $G \in M_\ell(D)$ such that 
\begin{equation}\label{iso}
 \vt(G)^t  (\Phi^{-1} f(u)) G = \diag( u_1,\ldots, u_{k}, 0,\ldots,0),
\end{equation} 
where 
$u_1,\ldots, u_{k}$ are as in Lemma~\ref{knus2}.
For $i=1,\ldots, k$, let $\vf_i$ denote the $\ve$-hermitian form over $(D,\vt)$ with Gram matrix $u_i$. 

The $F$-algebras with involution $(A,\s)$ and $(D, \vt)$ are Morita equivalent, cf. \cite[Chapter~I, Theorem~9.3.5]{Knus}.
Consider the hermitian form $\qf{u}_\s$ over $(A,\s)$.  Under the equivalences depicted in \eqref{diagram}, $\qf{u}_\s$
corresponds to the scaled 
$\ve$-hermitian form $ \qf{\Phi^{-1} f(u)}_{\vt^t}$ over  $(M_\ell(D), \vt^t)$, which then corresponds to the collapsed 
$\ell$-dimensional $\ve$-hermitian form $\vf$
with Gram matrix $ \diag(u_1,\ldots, u_{k}, 0,\ldots,0)$. Note that
\[\vf = \vf_1 \perp \ldots \perp \vf_{k} \perp  0 \perp \ldots \perp 0.\]

For $i\in \{1,\ldots, k\}$, the preimage of $\vf_i$ under these equivalences  is a nonsingular hermitian form over $(A,\s)$ which we denote by $h_i$.  Consequently we obtain the orthogonal decomposition
\[ \qf{u}_\s \simeq    h_1 \perp \ldots \perp h_{k} \perp 0 \perp \ldots \perp 0,\]
where $0$ denotes the zero form of rank $1$ over $(A,\s)$. The form 
$   h_1 \perp \ldots \perp h_{k}$ is nonsingular and we denote it by $\qnd{u}_\s$. Note that a standard argument shows 
that $\qnd{u}_\s$ is uniquely determined by $\qf{u}_\s$ up to isometry.

More generally, let $h$ be a (not necessarily diagonal) hermitian form over $(A,\s)$. By the same reasoning as above there exists a nonsingular hermitian form $h^\ns$  (also uniquely determined by $h$ up to isometry)
such that
\[h \simeq h^\ns \perp 0,\]
where $0$ is the zero form over $(A,\s)$ of suitable rank.

The following result characterizes the representation of not necessarily invertible elements in $\Sym(A,\s)$ in terms of hermitian forms.

\begin{prop}\label{three} 
Let $h$ be a hermitian form over $(A,\s)$ and let $u\in \Sym(A,\s)$. The following statements are equivalent:
\begin{enumerate}[$(i)$]
\item $u \in \das (2^r \x h)$ for some $r \in \N$.
\item The form $\qnd{u}_\s$ is a subform of $2^{r'}\x h$ for some $r' \in \N$.
\end{enumerate}
\end{prop}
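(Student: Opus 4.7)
My plan is to prove the two implications separately, handling $(ii)\Rightarrow(i)$ directly and $(i)\Rightarrow(ii)$ by transferring the problem to $(D,\vt)$ via Morita equivalence. For the easy direction $(ii)\Rightarrow(i)$, I would first observe that $\qnd{u}_\s$ itself represents $u$: indeed $\qf{u}_\s$ represents $u$ (take the vector $1\in A$, which gives $\qf{u}_\s(1,1)=u$), and since $\qf{u}_\s \simeq \qnd{u}_\s \perp 0\perp\cdots\perp 0$, the zero summands contribute only $0$ to the set of represented elements, so the sets of elements represented by $\qf{u}_\s$ and $\qnd{u}_\s$ coincide. Hence if $\qnd{u}_\s$ is a subform of $2^{r'}\x h$, then $2^{r'}\x h$ represents $u$, giving $(i)$ with $r=r'$.

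For the main direction $(i)\Rightarrow(ii)$, I would apply the Morita equivalence \eqref{diagram}. Under this equivalence $\qf{u}_\s$ corresponds to the $\ell$-dimensional $\ve$-hermitian form over $(D,\vt)$ with Gram matrix $\Phi^{-1}f(u)$, which by Lemma~\ref{knus2} is isometric to $\qf{u_1,\ldots,u_p}_\vt \perp 0\perp\cdots\perp 0$ for suitable $u_i\in\Sym(D,\vt)^{\x}$, and hence $\qnd{u}_\s$ corresponds to the nonsingular summand $\qf{u_1,\ldots,u_p}_\vt$. Write $\wt h$ for the Morita image of $h$. The assumption $(2^r\x h)(x,x)=u$ furnishes a hermitian-module morphism $a\mapsto xa$ from $(A,\qf{u}_\s)$ into $2^r\x h$, which under Morita becomes a morphism $\psi:\qf{u_1,\ldots,u_p}_\vt \perp 0\perp\cdots\perp 0\to 2^r\x\wt h$ of $\ve$-hermitian $D$-modules. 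Its restriction to the nonsingular summand has trivial kernel, since a kernel vector would lie in the radical of the nonsingular form $\qf{u_1,\ldots,u_p}_\vt$; thus this restriction is an isometric embedding. Composing with the projection onto the nonsingular part of $2^r\x\wt h$ preserves the form values and remains injective by the same radical argument, so $\qf{u_1,\ldots,u_p}_\vt$ embeds isometrically into $(2^r\x\wt h)^\ns = 2^r\x\wt h^\ns$; being a nonsingular subform of a nonsingular form, it splits off as an orthogonal summand. Transporting back through Morita gives $\qnd{u}_\s$ as an orthogonal summand, and in particular as a subform, of $2^r\x h^\ns$, hence of $2^r\x h$, giving $(ii)$ with $r'=r$.

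The main subtlety I anticipate is that the morphism $a\mapsto xa$ is typically not injective, so $\qf{u}_\s$ itself need not embed into $2^r\x h$ as an honest subform; the Morita passage together with the separation of $\qnd{u}_\s$ from the singular remainder is what turns the relevant morphism into an injection whose image then splits off. The edge case $(D,\vt,\ve)=(F,\id_F,-1)$ singled out in Lemma~\ref{eps} would need to be handled separately, but by Lemma~\ref{rescue} and the remark near the end of Section~\ref{sec:sign} the hermitian theory over $(A,\s)$ collapses to triviality in that situation, so the statement holds essentially vacuously there.
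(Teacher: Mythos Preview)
Your argument is correct, and it is genuinely different from the paper's. The paper proves the equivalence by a chain of biconditionals carried out at the level of $(M_\ell(D),\vt^t)$: it shows that $u$ being represented by $2^r\times h$ is equivalent to the block--diagonal matrix $\diag(u_1,\dots,u_k,0,\dots,0)$ being represented by the Morita image of $2^r\times h$, then by explicit conjugations with elementary and permutation matrices reduces this to each $\diag(u_i,\dots,u_i)$ being represented, which after collapsing becomes $\qf{u_i}_\vt\le 2^{s}\times g(\Phi^{-1}f_*(h))$, and finally reassembles the subform $\qf{u_1,\dots,u_k}_\vt$. Each of the matrix steps may enlarge the exponent, so the paper only obtains $r'\ge r$. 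Your approach instead uses that $(2^r\times h)(x,x)=u$ yields a genuine morphism $(A,\qf{u}_\s)\to 2^r\times h$ in $\Herm(A,\s)$, transports it through the categorical equivalence \eqref{diagram}, and observes that the restriction to the nonsingular summand is automatically injective (a kernel vector would lie in the radical) and hence splits off orthogonally. This is shorter, avoids all the matrix bookkeeping, and yields the sharper conclusion $r'=r$.

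One correction: your treatment of the exceptional case $(D,\vt,\ve)=(F,\id_F,-1)$ is not right. Lemma~\ref{rescue} only says that $\wt X_F=\varnothing$, which concerns signatures; the present proposition is a purely algebraic statement about represented elements and subforms, so nothing collapses here. The paper handles this case by redoing its matrix argument with $2\times 2$ skew-symmetric blocks in place of the $u_i$. In your approach no modification is needed at all: none of your steps (Morita transport of a morphism, the radical argument for injectivity, orthogonal splitting of a nonsingular subspace) uses the diagonal shape $\qf{u_1,\dots,u_p}_\vt$; they work verbatim for the nonsingular part $\vf^{\ns}$ of any $\ve$-hermitian form over $(D,\vt)$. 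So you should simply drop the appeal to Lemma~\ref{rescue} and note instead that your main argument already covers the exceptional case.
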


\begin{proof} We use the notation from the beginning of this section and denote being a subform by $\leq$. 
Assume first that $(D,\vt,\ve)\not= (F,\id_F, -1)$.
With reference to the equivalences in \eqref{diagram},
we have the following equivalent statements (with justifications below):
\begin{align}
\exists r &\in \N\hspace{.8em}  u \in \das (2^r \x h)  \nonumber \\
&\iff  \exists r\in \N\hspace{.8em}  \Phi^{-1}f(u) \in D_{(M_\ell(D), \vt^t)} (2^r\x \Phi^{-1} f_*(h))  \label{eq1}   \\
&\iff \exists r\in \N\hspace{.8em} \vt(G)^t  (\Phi^{-1} f(u)) G = \diag(u_1,\ldots, u_k, 0,\ldots,0)\nonumber \\
&\qquad\qquad\qquad\qquad\qquad\qquad\qquad\qquad  \in D_{(M_\ell(D), \vt^t)} (2^r\x \Phi^{-1} f_*(h)) \label{eq2p} \\
&\iff \exists s\in \N\, \forall i=1,\ldots,k \hspace{.8em}  \diag(u_i, \ldots, u_i ) 
\in D_{(M_\ell(D), \vt^t)} (2^{s}\x \Phi^{-1} f_*(h)) \label{eq2} \\
&\iff \exists s\in \N\, \forall i=1,\ldots,k \hspace{.8em}  \qf{\diag(u_i, \ldots, u_i )}_{\vt^t}   \leq  2^{s} \x \Phi^{-1} f_*(h)
\nonumber \\
&\iff \exists s\in \N\hspace{.8em} \ell\x \qf{u_1}_\vt,\ldots, \ell\x \qf{u_k}_\vt \leq 2^{s}\x g(\Phi^{-1} f_*(h)) \label{eq3} \\
&\iff \exists s_1\in \N\hspace{.8em}  \qf{u_1}_\vt, \ldots, \qf{u_k}_\vt \leq 2^{s_1}\x g(\Phi^{-1} f_*(h)) \nonumber \\
&\iff \exists s_2\in \N\hspace{.8em} \qf{u_1}_\vt \perp \ldots \perp \qf{u_k}_\vt  \leq  2^{s_2}\x g(\Phi^{-1} f_*(h)) \nonumber \\
&\iff \exists r'\in \N\hspace{.8em} \qnd{u}_\s = h_1\perp\ldots \perp h_k \leq  2^{r'}\x h. \label{eq4}
\end{align}
The justifications are as follows: \eqref{eq1} follows by scaling, \eqref{eq3} follows  by collapsing 
and \eqref{eq4} follows by the full sequence of equivalences in \eqref{diagram} 
(between $(D,\vt)$ and $(A,\s)$) and the observations preceding the proposition.  
Both directions of \eqref{eq2} follow  by applying sufficiently many transformations of the form $X \mapsto \vt(Q)^t X Q$ to $\diag(u_1,\ldots, u_k, 0,\ldots,0)$ or $u_1 I_\ell, \ldots, u_k I_\ell$, where $Q$ is 
\[\diag(0,\ldots,0,1,0,\ldots,0) \qquad \text{(where $1$ can be in any position)}\]
or a permutation matrix, and summing the results. 

Finally, if $(D,\vt,\ve)= (F,\id_F, -1)$, the same argument works mutatis mutandis, using $u_i \in \Sym_{-1}(M_2(D), \vt^t)^\x$,
noting that the step from \eqref{eq2p} to \eqref{eq2} works since $\ell$ is even (indeed, $\Phi$ is an invertible 
skew-symmetric 
matrix over $F$ in the case under consideration, and is thus of even dimension).
\end{proof}

\section{Maximal elements and sums of hermitian squares}\label{sec:max}

In contrast to quadratic forms, the signature of nonsingular hermitian forms of dimension one can take more than just two values. It is therefore natural to single out those elements $u$ in $\Sym(A,\s)$ whose associated hermitian form $\qf{u}_\s$ has maximal possible signature, leading to a natural notion of positivity, which we call $\eta$-maximality (where $\eta$
is a tuple of reference forms for $(A,\s)$), cf. Definition~\ref{def:max}.

Our main result, Theorem~\ref{main_thm_2}, shows that, as in the quadratic forms case, 
Pfister's local-global principle can be used to characterize ``totally positive'' elements in terms of 
(weighted) sums of hermitian squares, providing an extension of Artin's result to  algebras with involution. 

We treat the case of invertible elements first in Theorem~\ref{main_thm_1} since its proof is more streamlined
and the arguments appear more clearly.

\begin{defi}\label{def:max} 
Let $P\in X_F$ and let $\eta$ be a tuple of reference forms for $(A,\s)$.
\begin{enumerate}[$(i)$]

\item Let
\[m_P:= \max \{\sign_P^\eta \qf{a}_\s \mid a \in \sas^\x\}.\]
We call $u \in \sas^\x$ \emph{$\eta$-maximal at $P$} if $\sign_P^\eta \qf{u}_\s=m_P$.   

\item We call a  nonsingular 
hermitian form $h$ of rank $k$ over $(A,\s)$ 
\emph{$\eta$-maximal at $P$} if for every nonsingular form $h'$ of rank $k$ over $(A,\s)$ we have
$\sign_P^\eta h \geq \sign_P^\eta h'$. 

\item We call a  hermitian form $h$  over $(A,\s)$ (resp. an element $u \in \sas$) 
\emph{$\eta$-maximal at $P$} if $h^\ns$ (resp.  $\qnd{u}_\s$)  is $\eta$-maximal at $P$.

\end{enumerate}
\end{defi}

Observe that $m_P$ does not depend on the choice of $\eta$.

\begin{prop}\label{rank-sign} 
Let $P\in X_F$
and let   
\[M_P:= \max\{ \sign_P^\eta h \mid h \textrm{ is a rank $1$ nonsingular hermitian form over $(A,\s)$}\}.\]
Then 
\begin{enumerate}[$(i)$]
\item $ \max \{ \sign_P^\eta h \mid h \textrm{ is a rank $t$ nonsingular hermitian form over $(A,\s)$}\}  = t M_P$;
\item $m_P =\ell M_P$.
\end{enumerate}
\end{prop}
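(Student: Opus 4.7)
The strategy is to push both assertions through the Morita chain \eqref{diagram} to $\ve$-hermitian forms over $(D,\vt)$, where signatures become sums of Sylvester signatures of $1$-dimensional pieces and Lemma~\ref{knus2} provides diagonalization. The exceptional case $(D,\vt,\ve)=(F,\id_F,-1)$ is dispatched by Lemma~\ref{rescue}: there $P\in\Nil[A,\s]$, so $\sign_P^\eta\equiv 0$ and both parts read $0=0$. Assume henceforth $(D,\vt,\ve)\neq(F,\id_F,-1)$. The key bookkeeping fact, built into \eqref{diagram}, is that under the Morita equivalence a nonsingular rank $t$ hermitian form over $(A,\s)$ transports to a $t$-dimensional nonsingular $\ve$-hermitian form over $(D,\vt)$; in particular, as explained in the discussion preceding Proposition~\ref{three} (with $k=\ell$ in the nonsingular case), the form $\qf{u}_\s$ for $u\in\sas^\x$ has rank $\ell$ and transports to the $\ell$-dimensional form $\qf{u_1,\ldots,u_\ell}_\vt$ over $(D,\vt)$.

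For~(i), let $h$ be a nonsingular rank $t$ hermitian form over $(A,\s)$. Transporting through \eqref{diagram} and applying Lemma~\ref{knus2} produces $h\simeq h_1\perp\cdots\perp h_t$ with each $h_i$ a rank~$1$ nonsingular hermitian form over $(A,\s)$. Additivity of the signature and the definition of $M_P$ give
\[\sign_P^\eta h=\sum_{i=1}^t\sign_P^\eta h_i\leq tM_P,\]
with equality attained by orthogonally stacking $t$ copies of a rank~$1$ form realising $M_P$.

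For~(ii), the remark above gives $\qf{u}_\s\simeq h_1\perp\cdots\perp h_\ell$ with each $h_i$ rank~$1$ nonsingular, so $\sign_P^\eta\qf{u}_\s\leq\ell M_P$ and therefore $m_P\leq\ell M_P$. For the reverse inequality, pick $v_1,\ldots,v_\ell\in\Sym_\ve(D,\vt)^\x$ each realising $M_P$, set $V:=\diag(v_1,\ldots,v_\ell)\in\Sym_\ve(M_\ell(D),\vt^t)^\x$, and take $u:=f^{-1}(\Phi V)$. The identity $\ad_\Phi(\Phi V)=\Phi\vt^t(\Phi V)\Phi^{-1}=\ve^2\Phi V=\Phi V$ (using $\vt^t(\Phi)=\ve\Phi$ and $\vt^t(V)=\ve V$), combined with invertibility of $\Phi V$, shows $u\in\sas^\x$; tracing through \eqref{diagram} yields $\sign_P^\eta\qf{u}_\s=\ell M_P$.

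The main obstacle is simply keeping the Morita bookkeeping straight, notably the factor $\ell$ arising from $A\cong M_\ell(D)$ which is why $\qf{u}_\s$---though on a single copy of $A$---produces $\ell$ rank~$1$ summands after Morita reduction. Once that is in place, the substantive content is additivity of signature on orthogonal sums combined with the diagonalization afforded by Lemma~\ref{knus2}.
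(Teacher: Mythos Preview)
Your proof is correct and follows essentially the same route as the paper: reduce to the nil case separately, decompose rank-$t$ forms into rank-$1$ pieces for part~(i), and for part~(ii) construct the maximizing $u$ as $f^{-1}(\Phi\cdot\diag(\ldots))$ via the Morita chain~\eqref{diagram}. The only cosmetic differences are that the paper splits cases on $P\in\Nil[A,\s]$ rather than on $(D,\vt,\ve)=(F,\id_F,-1)$, uses a single repeated entry $d$ instead of your $v_1,\ldots,v_\ell$, and cites \cite[Theorem~4.2]{A-U-prime} explicitly for the preservation of signature under Morita transport (which you invoke implicitly when you say the $v_i$ ``realise $M_P$'').
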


\begin{proof} If $P\in \Nil[A,\s]$, then $m_P=M_P=0$, so we may assume that $P \in \wt X_F$.

 $(i)$ Let $h$ be a nonsingular form of rank~$t$. Since $h$ is an orthogonal sum of forms of rank~$1$, $\sign_P^\eta h\leq tM_P$. The equality follows by taking  a form $h_0$ of rank~$1$ such that $\sign_P^\eta h_0= M_P$ and considering $t\x h_0$.

$(ii)$ The inequality $m_P\leq \ell M_P$ follows from the fact that a form of dimension~$1$ has rank~$\ell$ and thus is
an orthogonal sum of $\ell$ hermitian forms of rank~$1$.
For the other inequality, we now construct a form of dimension~$1$ and signature $\ell M_P$. 

Using the notation introduced in Section~\ref{Morita},  
the tuple $\eta$ of reference forms for $(A,\s)$ obviously behaves as follows under the equivalences 
in \eqref{diagram}:
\begin{equation*}
\xymatrix{
\eta \ar@{|->}[r] & f_*(\eta) \ar@{|->}[r]   & (s\circ f_*)(\eta) \ar@{|->}[r]   & (g\circ s \circ f_*)(\eta),
}
\end{equation*}
where $\ve=1$ since $P \in \wt X_F$, cf. Lemmas~\ref{rescue} and \ref{eps}. 
Since signature and rank are preserved under Morita equivalence (cf. \cite[Theorem~4.2]{A-U-prime} and \cite[\S 2.2]{BP1}), there exists a  form $\qf{d}_\vt$ of rank $1$ over $(D,\vt)$ such that $\sign_P^{(g\circ s \circ f_*)(\eta)} \qf{d}_\vt =M_P$. Let $w = \diag(d,\ldots, d) \in M_\ell(D)$ and consider the form $\qf{w}_{\vt^t}$. 
Then \eqref{diagram} yields  
forms $ \qf{f^{-1}(\Phi w)}_\s$ and $ \qf{\Phi w}_{ \ad_\Phi }$ such that
\begin{equation*}
\xymatrix{
\qf{f^{-1}(\Phi w)}_\s \ar@{|->}[r] &  \qf{\Phi w}_{ \ad_\Phi } \ar@{|->}[r]    & \qf{w}_{\vt^t} \ar@{|->}[r]  & \ell\x \qf{d}_\vt
}
\end{equation*}
(note that $s(\qf{u}_{ \ad_\Phi }) :=
\Phi^{-1} \qf{u}_{ \ad_\Phi }=  \qf{\Phi^{-1} u}_{\vt^t }$ for $u\in M_\ell(D)$, which is easy to check). 
Then, by \cite[Theorem~4.2]{A-U-prime},  
\[ \sign_P^\eta \qf{f^{-1}(\Phi w)}_\s = \ell  \sign_P^{(g\circ s\circ f_*)(\eta)} \qf{d}_{\vt}=   \ell M_P.\qedhere\]
\end{proof}

\subsection{The case of invertible elements}

Let $b_1,\ldots, b_t \in F^\x$. We use the notation $\pf{b_1,\ldots, b_t}:= \qf{1, b_1} \ox \cdots \ox \qf{1, b_t}$ 
for Pfister forms  and  also write
\[H(b_1,\ldots, b_t) :=\{P \in X_F \mid b_1,\ldots, b_t \in P\}\] 
for the corresponding Harrison set. Note that such Harrison sets form a basis of the Harrison topology on $X_F$.

\begin{thm}\label{main_thm_1} 
Let  $b_1,\ldots, b_t \in F^\x$,  $\pi=\pf{b_1,\ldots, b_t}$, $Y=H(b_1,\ldots, b_t)$ and $\eta$ be a tuple of reference forms for $(A,\s)$.
Assume that $a\in\sas^\x$ is $\eta$-maximal at all  $P \in Y$. Let $u\in \Sym(A,\s)^\x$. The following statements are equivalent:
\begin{enumerate}[$(i)$]
\item $u$ is $\eta$-maximal at all $P\in Y$.
\item $u \in \das (2^s \x \pi \ox \qf{a}_\s)$ for some $s \in \N$.
\end{enumerate}
\end{thm}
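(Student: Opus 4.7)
\emph{Proof plan.} The plan is to prove the two implications separately: $(ii)\Rightarrow(i)$ by a direct signature comparison using Propositions~\ref{three} and~\ref{rank-sign}, and $(i)\Rightarrow(ii)$ by converting the hypothesis into a torsion statement in $W(A,\s)$ through the hermitian version of Pfister's local--global principle from \cite{A-U-Kneb}, then extracting an isometry by Witt cancellation. Throughout I will use the identity $\sign_P^\eta(q\ox h)=\sign_P q\cdot \sign_P^\eta h$ for $q$ a quadratic form over $F$ and $h$ a hermitian form over $(A,\s)$, which is automatic because $W(A,\s)$ is a $W(F)$-module and $\sign_P^\eta$ is obtained via Morita from a Sylvester-type signature.

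For $(ii)\Rightarrow(i)$, I would start with $u\in\das(2^s\x\pi\ox\qf{a}_\s)$ and apply Proposition~\ref{three} with $h:=\pi\ox\qf{a}_\s$, noting that $\qnd{u}_\s=\qf{u}_\s$ since $u$ is invertible, to obtain $s'\in\N$ with $\qf{u}_\s\leq 2^{s'}\x\pi\ox\qf{a}_\s$. Fix $P\in Y$. Then $\sign_P\pi=2^t$ (all $b_i\in P$), and $\sign_P^\eta\qf{a}_\s=m_P$ by the $\eta$-maximality of $a$, so the ambient form has signature $2^{s'+t}m_P$ at $P$. The orthogonal complement of $\qf{u}_\s$ has $A$-module rank $(2^{s'+t}-1)\ell$, so by Proposition~\ref{rank-sign} its signature is bounded above by $(2^{s'+t}-1)m_P$. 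Subtracting forces $\sign_P^\eta\qf{u}_\s\geq m_P$, and the trivial reverse bound closes this direction.

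For $(i)\Rightarrow(ii)$ I would consider $\vf:=\pi\ox\qf{u,-a}_\s$. For $P\in Y$, joint $\eta$-maximality of $u$ and $a$ gives $\sign_P^\eta\qf{u,-a}_\s=m_P-m_P=0$, hence $\sign_P^\eta\vf=0$; for $P\notin Y$, some $b_i\notin P$ kills $\sign_P\pi$, so $\sign_P^\eta\vf=0$ again. Thus $\sign_P^\eta\vf$ vanishes on all of $X_F$, and the hermitian Pfister local--global principle supplies $s\in\N$ with $2^s\vf=0$ in $W(A,\s)$, equivalently $2^s\x\pi\ox\qf{u}_\s=2^s\x\pi\ox\qf{a}_\s$ in $W(A,\s)$. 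Since both sides are nonsingular of equal $A$-rank, Witt cancellation upgrades this Witt equality to an honest isometry; as $\qf{1}$ is a subform of $\pi$, $\qf{u}_\s$ embeds into $2^s\x\pi\ox\qf{u}_\s\simeq 2^s\x\pi\ox\qf{a}_\s$, and one final application of Proposition~\ref{three} returns $u\in\das(2^r\x\pi\ox\qf{a}_\s)$ for some $r\in\N$. The step I expect to require the most care is checking the signature identity $\sign_P^\eta(q\ox h)=\sign_P q\cdot \sign_P^\eta h$ cleanly under the chosen normalisation of $\eta$ (together with the observation that the sign conventions on $\sign_P^\eta\qf{-a}_\s=-\sign_P^\eta\qf{a}_\s$ are consistent across $P\in Y$); once these bookkeeping points are in place, the argument is essentially the standard Pfister template transplanted to the hermitian Witt group.
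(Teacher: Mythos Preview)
Your proof is correct and follows essentially the same strategy as the paper: the local--global principle for $(i)\Rightarrow(ii)$ and the rank/signature bound from Proposition~\ref{rank-sign} for $(ii)\Rightarrow(i)$. The only difference is that you route both directions through Proposition~\ref{three}, whereas the paper, exploiting that $u$ is invertible, argues directly: for $(ii)\Rightarrow(i)$ it writes $u=h(x,x)$ and splits $h\simeq\qf{u}_\s\perp h'$ by the standard orthogonal complement argument, and for $(i)\Rightarrow(ii)$ it passes from $2^s\x\pi\ox\qf{a,-u}_\s=0$ in $W(A,\s)$ immediately to $u\in\das(2^s\x\pi\ox\qf{a}_\s)$ without naming Witt cancellation or invoking Proposition~\ref{three}.
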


\begin{proof} Assume $(i)$. It follows from the assumptions that $\sign_P^\eta \qf{a,-u}_\s=0$ for all $P\in Y$. Hence 
$\sign_P^\eta (\pi\ox \qf{a,-u}_\s )= \sign_P \pi\cdot \sign_P^\eta \qf{a,-u}_\s  =0$ for all $P\in X_F$. 
Thus $\pi\ox \qf{a,-u}_\s$ is  torsion in $W(A,\s)$ by \cite[Theorem~4.1]{LU1}. In other words, there exists $s \in\N$ such that $2^s \x \pi\ox \qf{a,-u}_\s=0$ in $W(A,\s)$ by \cite[Theorem~5.1]{Sch1}, from which $(ii)$ follows. 

Assume $(ii)$, i.e. assume that  $u \in \das (h)$, where $h=2^s \x \pi \ox \qf{a}_\s$. Then $u=h(x,x)$ for some $x\in M=A^{r}$, where $r= 2^{s+t}$. Since $u$ is invertible, a standard argument shows that $M= xA \oplus (xA)^{\perp_h}$. Thus
\[h\simeq \qf{u}_\s \perp h',\]
for some hermitian form $h'$ over $(A,\s)$ of rank $\ell (2^{s+t}-1)$ (since $A\cong M_\ell (D)$, for some $F$-division algebra $D$).
By assumption we have for every $P\in Y$ that
\begin{equation}\label{eggs}
\sign_P^\eta h =  2^{s+t} m_P = \sign_P^\eta \qf{u}_\s + \sign_P^\eta h'. 
\end{equation}
Since $\sign_P^\eta \qf{u}_\s\leq m_P$ and 
$ \sign_P^\eta h' \leq \frac{m_P}{\ell} \rk (h')= m_P (2^{s+t}-1)$  (by Proposition~\ref{rank-sign}),
these inequalities are in fact equalities by \eqref{eggs}, and $(i)$ follows.
\end{proof}

\begin{remark} If $P\in \Nil[A,\s]$, then the statement ``$u$ is $\eta$-maximal at $P$'' is trivially true. 
Thus Theorem~\ref{main_thm_1}$(i)$ only needs to be checked for $P \in Y\cap \wt{X}_F$.
\end{remark}

\subsection{The general case}

The following result is the equivalent of Theorem~\ref{main_thm_1} when $u$ is not necessarily invertible.

\begin{prop}\label{four}  
Let  $b_1,\ldots, b_t \in F^\x$,  $\pi=\pf{b_1,\ldots, b_t}$, $Y=H(b_1,\ldots, b_t)$ and $\eta$ be a tuple of reference forms for $(A,\s)$.
Assume that $a\in\sas^\x$ is $\eta$-maximal at all  $P \in Y$.  Let $h$ be a hermitian form over $(A,\s)$.
The following statements are equivalent:
\begin{enumerate}[$(i)$]
\item $h^\ns$ is $\eta$-maximal  at all $P\in Y$.
\item $h^\ns$ is a subform of   $2^k \x \pi \ox \qf{a}_\s$ for some $k \in \N$.
\end{enumerate}
\end{prop}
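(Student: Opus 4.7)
For $(ii)\Rightarrow(i)$: write $2^k\pi\otimes\qf{a}_\s\simeq h^\ns\perp h''$ with $h''$ nonsingular and set $r=\rk h^\ns$. For $P\in Y$, the $\eta$-maximality of $a$ and $\sign_P\pi=2^t$ yield $\sign_P^\eta(2^k\pi\otimes\qf{a}_\s)=2^{k+t}m_P=2^{k+t}\ell M_P$, while Proposition~\ref{rank-sign}$(i)$ bounds $\sign_P^\eta h^\ns\leq rM_P$ and $\sign_P^\eta h''\leq(2^{k+t}\ell-r)M_P$. These bounds sum to the total signature, forcing equality in both, so $\sign_P^\eta h^\ns=rM_P$ and $h^\ns$ is $\eta$-maximal at $P$.

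For $(i)\Rightarrow(ii)$ the plan is to reduce to Theorem~\ref{main_thm_1} via Proposition~\ref{three}, at the cost of inflating $h^\ns$ by a factor of $\ell$. The form $\ell\x h^\ns$ has rank $\ell r$ and inherits $\eta$-maximality at every $P\in Y$. Via the Morita equivalence \eqref{diagram} and diagonalization of its image over $(D,\vt)$ (using $2\times 2$ blocks in the $\ve=-1$ case of Lemma~\ref{knus2}), it admits an orthogonal decomposition $\ell\x h^\ns\simeq h_1\perp\cdots\perp h_r$ with each $h_j$ nonsingular of rank $\ell$. A signature pigeonhole as in the first paragraph forces $\sign_P^\eta h_j=\ell M_P=m_P$ for every $j$ and every $P\in Y$.

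Next, I realize each $h_j$ as $\qf{u_j}_\s$ for some $u_j\in\sas^\x$ following the construction in the proof of Proposition~\ref{rank-sign}$(ii)$: if $\qf{e^{(j)}_1,\ldots,e^{(j)}_\ell}_\vt$ diagonalizes the Morita image of $h_j$, take $u_j:=f^{-1}\bigl(\Phi\cdot\diag(e^{(j)}_1,\ldots,e^{(j)}_\ell)\bigr)$; the relation $\vt^t(\Phi)=\Phi$ (which holds whenever $\ve=1$, in particular when $\wt X_F\neq\varnothing$, cf.\ Lemmas~\ref{eps} and \ref{rescue}) guarantees $u_j\in\sas^\x$ and $\qf{u_j}_\s\simeq h_j$. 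The exceptional case $\ve=-1$ is handled trivially: by Lemma~\ref{rescue} one has $\wt X_F=\varnothing$, and every nonsingular rank-$\ell$ hermitian form over $(A,\s)$ is the unique symplectic form of that rank, which coincides with $\qf{u_j}_\s$ for any $u_j\in\sas^\x$. Each $u_j$ is thus $\eta$-maximal at every $P\in Y$, so Theorem~\ref{main_thm_1} yields $s_j\in\N$ with $u_j\in\das(2^{s_j}\pi\otimes\qf{a}_\s)$, whence Proposition~\ref{three} gives $h_j=\qnd{u_j}_\s\leq 2^{s'_j}\pi\otimes\qf{a}_\s$.

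Setting $s=\max_j s'_j$ and choosing $N\in\N$ with $2^N\geq r$, repeated orthogonal summation yields $\ell\x h^\ns=h_1\perp\cdots\perp h_r\leq r\cdot(2^s\pi\otimes\qf{a}_\s)\leq 2^{N+s}\pi\otimes\qf{a}_\s$; since $h^\ns$ is an orthogonal summand of $\ell\x h^\ns$, we conclude $h^\ns\leq 2^{N+s}\pi\otimes\qf{a}_\s$, as required. The main technical obstacle is the realization step: Theorem~\ref{main_thm_1} only handles invertible elements, and only rank-$\ell$ nonsingular hermitian forms over $(A,\s)$ can be written as $\qf{u}_\s$ with $u\in\sas^\x$, so the inflation to $\ell\x h^\ns$ is what makes the decomposition into such rank-$\ell$ blocks possible regardless of whether $\ell\mid r$.
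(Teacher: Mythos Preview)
Your argument is correct but substantially more elaborate than the paper's. For $(ii)\Rightarrow(i)$ the two proofs agree. For $(i)\Rightarrow(ii)$, however, the paper bypasses Theorem~\ref{main_thm_1} and Proposition~\ref{three} entirely: since $\sign_P^\eta h^\ns = rM_P = rm_P/\ell$ and $\sign_P^\eta\qf{a}_\s = m_P$ for every $P\in Y$, the Witt class $r\times\qf{a}_\s - \ell\times h^\ns$ has signature zero on $Y$; hence $\pi\otimes(r\times\qf{a}_\s - \ell\times h^\ns)$ has signature zero on all of $X_F$ and is torsion by Pfister's local-global principle. This yields an isometry $2^k\ell\times\pi\otimes h^\ns \simeq 2^kr\times\pi\otimes\qf{a}_\s$ (both sides have rank $2^{k+t}\ell r$), and $h^\ns$ sits inside the left-hand side, hence inside $2^{k'}\times\pi\otimes\qf{a}_\s$ for suitable $k'$.

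Your detour---inflating by $\ell$, cutting into rank-$\ell$ blocks, realizing each block as a one-dimensional form $\qf{u_j}_\s$, invoking Theorem~\ref{main_thm_1} block by block, then Proposition~\ref{three}, then reassembling---does work, but it reproves the torsion step once per block and forces the realization discussion (including the $\ve=-1$ case split), none of which is needed. The paper's route is precisely the same local-global idea that underlies Theorem~\ref{main_thm_1}$(i)\Rightarrow(ii)$, applied once to the whole form $h^\ns$ rather than piecewise; this is what makes Proposition~\ref{four} an independent (and in fact simpler) statement rather than a corollary of the invertible case.
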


\begin{proof} $(i)\Rightarrow (ii)$: We write $h \simeq h^\ns \perp 0$ and let $r:=\rk(h^\ns)$. Let $P\in Y$.
By Proposition~\ref{rank-sign} it follows that 
$\sign_P^\eta h^\ns = r m_P/\ell$.
Note that  $\sign_P^\eta \qf{a}_\s=m_P$ and that $\rk (\qf{a}_\s) = \ell$. 
It follows that $\sign_P^\eta (r\x \qf{a}_\s - \ell \x h^\ns)=0$ for every $P \in Y$. Therefore, 
by Pfister's local-global principle (\cite[Theorem~4.1]{LU1}, \cite[Theorem~5.1]{Sch1}), there exists $k\in \N$ such that $2^k \ell \x \pi\ox h^\ns \simeq 2^k r\x \pi\ox \qf{a}_\s$ and the result follows.

$(ii)\Rightarrow (i)$: Let $P \in Y$. By the assumption on $a$ and Proposition~\ref{rank-sign}, 
$2^k \x \pi \ox \qf{a}_\s$ is $\eta$-maximal. 
The conclusion follows by the additivity of $\sign_P^\eta$. 
\end{proof}

It follows from Proposition~\ref{three} and Proposition~\ref{four} that

\begin{thm}\label{main_thm_2}  
Let  $b_1,\ldots, b_t \in F^\x$,  $\pi=\pf{b_1,\ldots, b_t}$, $Y=H(b_1,\ldots, b_t)$ and $\eta$ be a tuple of reference forms for $(A,\s)$.
Assume that $a\in\sas^\x$ is $\eta$-maximal at all  $P \in Y$.  Let $u\in\Sym(A,\s)$.
The following statements are equivalent:
\begin{enumerate}[$(i)$]
\item $u$ is $\eta$-maximal  at all $P\in Y$.
\item $u \in \das (2^k \x \pi \ox \qf{a}_\s)$ for some $k \in \N$.
\end{enumerate}
\end{thm}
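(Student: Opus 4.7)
The plan is to chain together Proposition~\ref{three} and Proposition~\ref{four}, applied to the hermitian form $h := \pi \ox \qf{a}_\s$; the theorem should drop out with no new ingredients, since all the hermitian-forms work has been done. First I would rewrite statement $(i)$ purely in terms of the nonsingular part of $\qf{u}_\s$: by Definition~\ref{def:max}$(iii)$, $u\in\Sym(A,\s)$ is $\eta$-maximal at $P$ precisely when $\qnd{u}_\s$ is $\eta$-maximal at $P$, so $(i)$ is exactly the statement that $\qnd{u}_\s$ is $\eta$-maximal at every $P\in Y$.

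Next I would feed this into Proposition~\ref{four}, taking the hermitian form there to be $\qf{u}_\s$ (whose nonsingular part, by the discussion in Section~\ref{nons}, is $\qnd{u}_\s$). The proposition says that $\qnd{u}_\s$ is $\eta$-maximal at every $P\in Y$ if and only if there exists $k\in\N$ such that
\[\qnd{u}_\s \leq 2^k \x \pi \ox \qf{a}_\s,\]
where $\leq$ denotes being a subform.

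Finally, to bridge the gap between subform containment and representability, I would invoke Proposition~\ref{three} with $h := \pi \ox \qf{a}_\s$. It gives directly that $u\in \das(2^r \x \pi \ox \qf{a}_\s)$ for some $r\in\N$ if and only if $\qnd{u}_\s$ is a subform of $2^{r'}\x \pi\ox \qf{a}_\s$ for some $r'\in\N$. Composing the three equivalences yields the desired equivalence of $(i)$ and $(ii)$.

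The only thing that could be mistaken for an obstacle is matching the various $2$-power exponents ($k$, $r$, $r'$) appearing in the two propositions, but since all three statements are purely existential in the exponent and since $2^m \x h$ is trivially a subform of $2^{m+1}\x h$ (and similarly for $\das$), one can harmonize them by taking the maximum. There is no new analytic content to supply; the work lies entirely in the preparatory propositions and in the definition of the nonsingular part, both of which are already available.
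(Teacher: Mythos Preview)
Your proposal is correct and follows exactly the same approach as the paper, which simply states that the theorem follows from Proposition~\ref{three} and Proposition~\ref{four}. You have merely spelled out the chaining in detail (unwinding Definition~\ref{def:max}$(iii)$, applying Proposition~\ref{four} to $\qf{u}_\s$, then Proposition~\ref{three} to $h=\pi\ox\qf{a}_\s$), which is precisely what the paper intends.
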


To conclude this section we consider $(A,\s)=(M_n(F), t)$, where $t$ denotes transposition, and   obtain  a result   similar to  a classical theorem of Gondard and Ribenboim \cite[Th\'eor\`eme~1]{G-R}:

\begin{cor} A  symmetric matrix over $F$ is positive semidefinite  
at all $P \in X_F$ if and only
if it is a sum of hermitian squares in $(M_n(F), t)$.
\end{cor}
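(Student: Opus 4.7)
The plan is to deduce the corollary by specializing Theorem~\ref{main_thm_2} to $(A,\s) = (M_n(F),t)$, taking $\pi = \qf{1}$ (so $Y = X_F$) and witness $a = I_n \in \sas^\x$. For this algebra the Morita data from Section~\ref{Morita} are $D=F$, $\vt = \id_F$, $\Phi = I_n$, $\ell = n$ and $\ve = 1$. The Sylvester signature is nonzero at every ordering of $F$, so $\Nil[M_n(F),t] = \varnothing$ and $\wt X_F = X_F$, and the chain of equivalences in \eqref{diagram} sends each form $\qf{u}_t$ (for $u \in \Sym(M_n(F),t)$) to the quadratic form on $F^n$ with Gram matrix $u$; in particular $\qf{I_n}_t$ is sent to $n\x\qf{1}$, which has Sylvester signature $n$ at every $P \in X_F$.

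Using Remark~\ref{ricola} I enlarge the tuple of reference forms $\eta$ so that $\sign_P^\eta \qf{I_n}_t > 0$, and hence equals $n$, at every $P \in X_F$. Proposition~\ref{rank-sign} then forces $M_P = 1$ and $m_P = n$ for all $P$, so $a = I_n$ is $\eta$-maximal at every $P \in X_F$, fulfilling the hypothesis of Theorem~\ref{main_thm_2}.

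The crux of the argument is the translation between $\eta$-maximality and positive semidefiniteness for arbitrary $u \in \Sym(M_n(F),t)$. By Lemma~\ref{knus2} there exists an invertible $G \in M_n(F)$ with $G^t u G = \diag(u_1,\dots,u_k,0,\dots,0)$ where $u_i \in F^\x$, so under \eqref{diagram} the nonsingular part $\qnd{u}_t$ corresponds to the quadratic form $\qf{u_1,\dots,u_k}$ over $F$. Its $P$-signature attains the maximum value $k = \rk(\qnd{u}_t)$ exactly when every $u_i$ is positive at $P$, which is precisely the condition that $u$ is positive semidefinite at $P$; hence $u$ is $\eta$-maximal at every $P \in X_F$ if and only if $u$ is positive semidefinite at every ordering of $F$.

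Theorem~\ref{main_thm_2} then yields the equivalence with $u \in \das(2^k \x \qf{I_n}_t)$ for some $k \in \N$, and unwinding the definition of $\das$ for the form $2^k \x \qf{I_n}_t$ on $(M_n(F))^{2^k}$ gives exactly $u = \sum_{i=1}^{2^k} x_i^t x_i$ with $x_i \in M_n(F)$. Padding by zero matrices makes this equivalent to $u$ being a sum of (arbitrarily many) hermitian squares in $(M_n(F),t)$, completing the argument. The step that requires the most care is the dictionary in the third paragraph between $\eta$-maximality and positive semidefiniteness, keeping track of the distinction between the dimension of a hermitian form over $(M_n(F),t)$ and the rank of its underlying $M_n(F)$-module under the collapsing equivalence.
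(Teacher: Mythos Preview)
Your proof is correct and follows essentially the same route as the paper's: specialize Theorem~\ref{main_thm_2} to $(M_n(F),t)$ with $a=I_n$ and $Y=X_F$, verify that $I_n$ is $\eta$-maximal everywhere, and translate $\eta$-maximality of $\qnd{u}_t$ into positive semidefiniteness of $u$. The only cosmetic differences are that the paper takes $\eta=(\qf{1}_t)$ directly (since this single form already has nonzero signature everywhere) rather than enlarging via Remark~\ref{ricola}, and phrases the positivity translation in terms of ``nonzero eigenvalues'' rather than the congruence diagonalization from Lemma~\ref{knus2}; your version of that step is in fact the more precise of the two.
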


\begin{proof}
We may take $\eta=(\qf{1}_t)$ as a tuple of reference forms for $(A,\s)$ since $\sign_P^\eta \qf{1}_t=n$ for every $P\in X_F$. Note that $\wt X_F =X_F$.
Let $U\in \Sym(M_n(F), t)$. Then
$U$  is positive semidefinite  at all  $P \in X_F$ if and only if  all nonzero eigenvalues of $U$ are positive at all $P\in X_F$ 
if and only if $\qnd{U}_t$  is $\eta$-maximal at all  $P \in X_F$.  
Finally, by Theorem~\ref{main_thm_2} with $a=1$ and $Y=H(1)=X_F$,  this happens if and only if
$U$ is a sum of hermitian squares in $(M_n(F), t)$.
\end{proof}

\section{A theorem and a question of Procesi and Schacher}

Procesi and Schacher already considered a notion of positivity of elements in an algebra with involution and proved
a result characterizing totally positive elements (in their sense) in terms of weighted sums of squares of symmetric elements, cf. \cite[Theorem~5.4]{P-S}. They also raised the question  of whether positive elements are always 
sums of hermitian squares (and not necessarily squares of symmetric elements), cf.  \cite[p.~404]{P-S}. In this spirit, 
after  showing how their notion of positivity relates to ours, we prove a sums of hermitian squares 
version of \cite[Theorem~5.4]{P-S}, using  Theorem~\ref{main_thm_2}, and use our techniques to fully answer the question raised in \cite[p.~404]{P-S} of whether positive elements are always 
sums of hermitian squares.

Let $(A,\s)$ be an $F$-algebra with involution, let $u\ in \Sym(A,\s)$.  In \cite{P-S},
Procesi and Schacher define the positivity of $u$ in terms of  the corresponding scaled involution trace form $\tasu$. 
Consider 
\[T_{(A,\s)}: A\x A\to K,\ (x,y) \mapsto \Trd_A(\s(x)y)\quad \text{for } x,y\in A\] 
and
\[T_{(A,\s,u)}: A\x A\to K,\ (x,y) \mapsto \Trd_A(\s(x)uy)\quad \text{for } x,y\in A,\] 
where $u\in \Sym(A,\s)$. These forms are both symmetric bilinear over $F$ if $\s$ is of the first kind and hermitian 
over $(K,\iota)$  if $\s$ is of the second kind. 
The first form is always nonsingular, whereas the second form is nonsingular if and only if $u$ is invertible, 
cf. \cite[\S 11]{BOI}. 

Recall the following definitions from \cite[Definitions~1.1 and 5.1]{P-S}:

\begin{defi}\label{P-S-pos}
Let $P\in X_F$.
\begin{enumerate}[$(i)$]
\item The involution $\s$ is called \emph{positive at $P$} if  the form $\tas$ is positive semidefinite at $P$.
We also introduce the notation
\[X_\s:=\{P \in X_F \mid \s \text{ is positive at } P\}.\]
\item Assume that $\s$ is positive at $P$. An element $u\in \Sym(A,\s)$ is called \emph{positive at $P$} 
if the form $\tasu$ is positive semidefinite at $P$.
\end{enumerate}
\end{defi}

\begin{remark}\label{PD-PSD}
Recall that a nonsingular symmetric bilinear form over $F$ or a hermitian form over $(K,\iota)$ is positive semidefinite at a given ordering $P$  on $F$
if and only if it is positive definite at $P$.
\end{remark}

Another way of looking at the Procesi-Schacher notion of positivity is 
 from the point of view of signatures of involutions and signatures of
 hermitian forms, and specifically the signature of the form $\qf{u}_\s$.  
Propositions~\ref{eqvs2} and \ref{not_used} give the precise connections between these approaches, whereas Remark~\ref{eqvs1} describes
positivity of $u$ at $P$ in terms of a different trace form, $T_{(A,\s_u)}$, under a weaker hypothesis.

Recall from  \cite{LT} and \cite{Q} (or \cite[\S 11]{BOI})  that the signature of $\s$ at $P\in X_F$ is defined as
\begin{equation}\label{LTQ}
\sign_P \s :=\sqrt{\sign_P T_{(A,\s)}}.
\end{equation}

\begin{remark} If follows from \eqref{LTQ} that $\s$ is positive at $P\in X_F$ if and only if $\sign_P\s=\deg A (=n)$. 
\end{remark}

Recall that if $P \in \wt X_F$ then $A\ox_F F_P \sim D_P$, where 
$D_P$  is one of $F_P$, $F_P(\sqrt{-1})$ or $(-1,-1)_{F_P}$. We define
$\lambda_P = 1$ if $D_P = F_P$ or $F_P(\sqrt{-1})$ and $\lambda_P = 2$ if 
$D_P = (-1,-1)_{F_P}$. We also let  $n_P = n/{\lambda_P}$, so that $A\ox_F F_P
\cong M_{n_P}(D_P)$.

Now let $h$ be a  hermitian form  over $(A,\s)$ with adjoint involution $\ad_h$.
Then for $P \in X_F$,
\begin{equation}\label{h_adh}
\sign_P \ad_h = \lambda_P | \sign_P^\eta h |
\end{equation}
(if $P\in \Nil[A,\s]$, both sides of \eqref{h_adh} are zero), cf. \cite[Lemma~4.6]{A-U-Kneb}.  Note that the correspondence between $\ad_h$ and $h$ is unique only up to multiplication of $h$ by a nonzero element in $F$ and that $\lambda_P$ only depends on the Brauer class of $A$.

In the following proposition we collect a few elementary statements about signatures of involutions and one-dimensional forms.
For $u\in \sas^\x$ we write  $\s_u:=\Int(u^{-1})\circ \s$.

\begin{prop}\label{dontcare} 
Let  $u\in \sas^\x$ and let $P\in X_F$.
\begin{enumerate}[$(i)$]
\item $\sign_P \s_u =\lambda_P | \sign_P^\eta \qf{u}_\s|$.
\item $\sign_P\s_u \in \{0,\ldots, n\}$.
\item $\sign_P^\eta \qf{u}_\s \in \{-n_P, \ldots, n_P\}$. 
\item $\sign_P\s_u=n \Leftrightarrow |\sign_P^\eta \qf{u}_\s| =  n_P$.
\end{enumerate}
\end{prop}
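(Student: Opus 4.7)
The plan is to reduce everything to equation~\eqref{h_adh} by identifying $\s_u$ with the adjoint involution of the rank-one hermitian form $\qf{u}_\s$. The underlying right $A$-module of $\qf{u}_\s$ is $A$ itself, and $\End_A(A)\cong A$ acts by left multiplication. A direct computation using $h(x,y)=\s(x)uy$ and the defining identity $h(ax,y)=h(x,\ad_h(a)y)$ yields
\[
\s(a)u = u\,\ad_h(a),\qquad \text{hence}\qquad \ad_{\qf{u}_\s}(a) = u^{-1}\s(a)u = \s_u(a).
\]
So $\s_u = \ad_{\qf{u}_\s}$.

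With this identification, part~$(i)$ is an immediate instance of \eqref{h_adh} applied to $h=\qf{u}_\s$; the case $P\in\Nil[A,\s]$ is covered by the parenthetical remark after \eqref{h_adh}, both sides being zero (note that $\s$ and $\s_u$ have the same Brauer class and type, so $\Nil[A,\s]=\Nil[A,\s_u]$). For part~$(ii)$ I would invoke the classical bound on signatures of involutions: since $\s_u$ is an involution on an algebra of degree $n$, the definition $\sign_P\s_u=\sqrt{\sign_P T_{(A,\s_u)}}$ together with \cite{LT} and \cite{Q} (or \cite[\S11]{BOI}) gives $\sign_P\s_u\in\{0,1,\ldots,n\}$.

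Parts~$(iii)$ and $(iv)$ are then formal consequences of $(i)$ and $(ii)$ via the defining identity $n=\lambda_P n_P$. Combining $(i)$ and $(ii)$ yields $\lambda_P|\sign_P^\eta\qf{u}_\s|\le n=\lambda_P n_P$, hence $|\sign_P^\eta\qf{u}_\s|\le n_P$, proving $(iii)$ (with the nil-ordering case again trivial). For $(iv)$ the equivalence
\[
\sign_P\s_u = n \iff \lambda_P|\sign_P^\eta\qf{u}_\s| = \lambda_P n_P \iff |\sign_P^\eta\qf{u}_\s| = n_P
\]
is immediate from~$(i)$.

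The only step requiring any real verification is the identification $\s_u=\ad_{\qf{u}_\s}$, and even that is a one-line computation. So I do not expect a serious obstacle here: the proposition is essentially a repackaging of \eqref{h_adh} together with the standard bound on $\sign_P\s$, and the main risk is only being careful about the nil-ordering case, where one uses that $\lambda_P$ (when defined) and the definitions make both sides of each equivalence collapse consistently.
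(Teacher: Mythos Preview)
Your proposal is correct and follows essentially the same approach as the paper: the paper also deduces $(i)$ from \eqref{h_adh} after noting that $\s_u$ is adjoint to $\qf{u}_\s$ (it just says this ``can easily be verified'' where you supply the one-line computation), proves $(ii)$ from the square property of $\sign_P T_{(A,\s_u)}$ via \cite{LT}, \cite{Q} and \eqref{LTQ}, and derives $(iii)$ and $(iv)$ formally from $(i)$ and $(ii)$.
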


\begin{proof} $(i)$ follows from \eqref{h_adh} since the involution $\s_u$ is adjoint to the form $\qf{u}_\s$, as can easily be verified. 

$(ii)$: Since $\dim_K A =m=n^2$ we have $\dim T_{(A,\s_u)} =m$. Using that  $\sign_P T_{(A,\s_u)}$ is always a square (cf. \cite{LT}, \cite{Q}) we obtain $\sign_P T_{(A,\s_u)} \in \{0, 1, 4,\ldots, (n-1)^2, n^2\}$ and thus $\sign_P\s_u \in \{0,\ldots, n\}$ by \eqref{LTQ}.

$(iii)$ follows from $(i)$ and $(ii)$, whereas $(iv)$ follows from $(i)$.
\end{proof}

\begin{remark}\label{rainbow} 
It is clear that $P \in X_\s$ if and only if the form $\tas$ is positive definite at $P$, cf. \eqref{LTQ}.
 Furthermore,
$m_P\leq n_P$ and  if $P\in X_\s$, then $m_P=n_P$ by Proposition~\ref{dontcare}$(iv)$. 
\end{remark}

As an immediate consequence of Proposition~\ref{dontcare} we obtain:

\begin{cor}\label{rainbow2}
The following statements are equivalent:
\begin{enumerate}[$(i)$]
\item $P \in X_\s$.
\item $|\sign_P^\eta \qf{1}_\s| =  n_P$ for all tuples of reference forms $\eta$.
\item $\sign_P^\eta \qf{1}_\s =  n_P$ for all tuples of reference forms $\eta$ of the form $(\qf{1}_\s,\ldots)$. 
\end{enumerate}
\end{cor}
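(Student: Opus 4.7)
The plan is to derive everything directly from Proposition~\ref{dontcare}(iv) applied to $u=1$, supplemented by the sign-normalisation built into Remark~\ref{ricola}. Observe that $\s_1 = \Int(1^{-1})\circ\s = \s$, so Proposition~\ref{dontcare}(iv) with $u=1$ reads
\[ \sign_P\s = n \iff |\sign_P^\eta \qf{1}_\s| = n_P \]
for any tuple of reference forms $\eta$. Combined with the remark that $P\in X_\s$ is equivalent to $\sign_P\s = n$, this immediately yields the equivalence $(i)\iff (ii)$.

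For $(ii)\Rightarrow(iii)$, first I would check $(i)\Rightarrow(iii)$: assume $P\in X_\s$ and let $\eta=(\qf{1}_\s,\eta_2,\ldots,\eta_t)$. By $(i)\iff(ii)$ we have $|\sign_P^\eta \qf{1}_\s|=n_P$; since $n_P = n/\lambda_P\geq 1$, this forces $s_{\mu_P}\qf{1}_\s\neq 0$, so by the normalization property of $\eta$ recorded in Remark~\ref{ricola} we get $\sign_P^\eta \qf{1}_\s >0$, and hence $\sign_P^\eta \qf{1}_\s = n_P$. For $(iii)\Rightarrow(ii)$, I would use the fact that the absolute value $|\sign_P^\eta h|$ is independent of the choice of reference tuple $\eta$ (since two choices of $\sign_P^\eta$ differ only by sign, as recalled in Section~\ref{sec:sign}): given an arbitrary tuple of reference forms $\eta=(\eta_1,\ldots,\eta_t)$, the prepended tuple $\eta'=(\qf{1}_\s,\eta_1,\ldots,\eta_t)$ is of the shape required by $(iii)$, so $\sign_P^{\eta'}\qf{1}_\s = n_P$, and therefore $|\sign_P^\eta \qf{1}_\s| = |\sign_P^{\eta'}\qf{1}_\s| = n_P$.

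There is essentially no technical obstacle here; the whole statement is a packaging of Proposition~\ref{dontcare}(iv) together with the $\eta$-independence of the absolute value of the signature and the sign convention of Remark~\ref{ricola}. The only point worth a moment's care is the nil case $P\in\Nil[A,\s]$, where $\sign_P^\eta \equiv 0$ and $P\notin X_\s$ (since then $\sign_P\s < n$ by Proposition~\ref{dontcare}(iv)), so all three conditions fail simultaneously and the equivalences remain vacuously true.
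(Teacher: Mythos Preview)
Your proof is correct and follows exactly the route the paper intends: the corollary is stated there as ``an immediate consequence of Proposition~\ref{dontcare}'', and your argument simply unpacks that consequence by applying Proposition~\ref{dontcare}(iv) with $u=1$ (so that $\s_u=\s$), together with the sign normalization from Remark~\ref{ricola} and the $\eta$-independence of $|\sign_P^\eta|$. There is nothing to add.
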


\begin{remark}\label{rainbow3} 
Let $P\in X_\s$. By Corollary~\ref{rainbow}, $P\in \wt X_F$ and so $\ve_P=1$ by definition of signature. 
Hence $(A\ox_F F_P, \s\ox\id)\cong (M_{n_P}(D_P), \ad_{\Phi_P})$, for some matrix 
$\Phi_P\in \Sym ( M_{n_P}(D_P), \bbar^t)$. It follows from \cite[Lemma~3.10]{A-U-Kneb} and Corollary~\ref{rainbow2}
that $\sign \Phi_P=\pm n_P$, where $\sign$ denotes the Sylvester 
signature of hermitian matrices. In other words, $\Phi_P$ is positive definite or negative definite and, up to replacing $\Phi_P$
by $-\Phi_P$ (since $\ad_{\Phi_P}=\ad_{-\Phi_P}$) we may assume that $\Phi_P$ is positive definite.
\end{remark}

In the following result  we make the link between Procesi and Schacher's notion of positivity (statement $(ii)$; see also
Definition~\ref{P-S-pos})  and  signatures of hermitian forms.

\begin{prop}\label{eqvs2} 
Let $\eta$ be a tuple of reference forms for $(A,\s)$,  $P\in X_F$ and $u\in \sas^\x$. Assume that $\s$ is positive at $P$.
The following statements are equivalent:
\begin{enumerate}[$(i)$]
\item The involution $\s_u$ is positive at $P$.
\item The form $\tasu$ is positive definite or negative definite at $P$.
\item $u$ or $-u$ is $\eta$-maximal at $P$.
\end{enumerate}
\end{prop}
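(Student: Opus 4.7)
The plan is to show (i) $\iff$ (iii) and (i) $\iff$ (ii) separately, each time reducing to the intermediate condition $|\sign_P^\eta \qf{u}_\s| = n_P$. For (i) $\iff$ (iii) the argument is a direct assembly of existing machinery: since $\qf{u,-u}_\s$ is metabolic, additivity of signature yields $\sign_P^\eta \qf{-u}_\s = -\sign_P^\eta \qf{u}_\s$; the hypothesis places $P$ in $X_\s$, so $m_P = n_P$ by Remark~\ref{rainbow}; therefore ``$u$ or $-u$ is $\eta$-maximal at $P$'' is equivalent to $\sign_P^\eta \qf{u}_\s = \pm n_P$, i.e.\ to $|\sign_P^\eta \qf{u}_\s| = n_P$; by Proposition~\ref{dontcare}(iv) this amounts to $\sign_P \s_u = n$, hence to (i) by the Remark preceding Proposition~\ref{dontcare}.

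For (i) $\iff$ (ii), I would rewrite both trace forms in terms of $T_{(A,\s)}$, which is positive definite at $P$ by hypothesis and Remark~\ref{PD-PSD}. A short computation using cyclicity of $\Trd$ yields
\[
T_{(A,\s,u)}(x,y) = T_{(A,\s)}(x, L_u y) \quad\text{and}\quad T_{(A,\s_u)}(x,y) = T_{(A,\s)}(x, \Int(u)(y)),
\]
where $L_u$ denotes left multiplication by $u$. Using $\s(u)=u$ one verifies directly that $L_u$ and $\Int(u)$ are self-adjoint with respect to the inner product $T_{(A,\s)}$, and they commute because $\Int(u) = L_u\circ R_{u^{-1}}$. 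After extending scalars to $F_P$, they are simultaneously diagonalizable with real spectra: the eigenvalues of $L_u$ are the reduced eigenvalues $\lambda_1,\ldots,\lambda_n$ of $u$ (with the appropriate multiplicities dictated by $A\ox_F F_P$), and those of $\Int(u)$ are the ratios $\lambda_i/\lambda_j$. Hence $T_{(A,\s,u)}$ is positive or negative definite at $P$ iff all $\lambda_i$ share a sign iff every $\lambda_i/\lambda_j$ is positive iff $T_{(A,\s_u)}$ is positive definite at $P$. Since $\sign_P T_{(A,\s_u)} \ge 0$ by \eqref{LTQ}, ``positive or negative definite'' collapses to ``positive definite'' for the latter form, so the two conditions coincide and give (i) $\iff$ (ii).

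The main obstacle lies in justifying the eigenvalue analysis rigorously over the possibly nonsplit algebra $A\ox_F F_P$. This is cleanest via the matrix model of Remark~\ref{rainbow3}: after a change of basis absorbing $\Phi_P$, one may assume $(A\ox_F F_P,\s\ox\id)\cong (M_{n_P}(D_P), \bt)$, with $u$ corresponding to a hermitian matrix $U$ over $D_P$. The signature computations for $T_{(A,\s,u)}$ and $T_{(A,\s_u)}$ then reduce to standard linear algebra over the real closed field $F_P$, and both (i) and (ii) translate into $U$ being positive or negative definite---equivalently $|\sign U| = n_P$, which matches $|\sign_P^\eta \qf{u}_\s|$ via the Morita equivalence.
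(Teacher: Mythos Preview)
Your argument for $(i)\Leftrightarrow(iii)$ matches the paper's exactly: both use Proposition~\ref{dontcare}$(iv)$ together with $m_P=n_P$ from Remark~\ref{rainbow}.

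For $(i)\Leftrightarrow(ii)$, however, you take a genuinely different route. The paper invokes the structural fact from \cite[(11.1)]{BOI} that $\s_u\ox\prescript{\iota}{}\s$ corresponds to $\ad_{\tasu}$ under $A\ox_K\prescript{\iota}{}A\cong\End_K(A)$, together with the multiplicativity $\sign_P\ad_{\tasu}=\sign_P\s_u\cdot\sign_P\s$ from \cite[Remark~4.2]{A-U-Kneb} and $|\sign_P\tasu|=\sign_P\ad_{\tasu}$. Since $\sign_P\s=n$ by hypothesis, this immediately gives $\sign_P\s_u=n\iff|\sign_P\tasu|=n^2$, which is $(i)\Leftrightarrow(ii)$ in two lines. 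Your approach instead rewrites $\tasu$ and $T_{(A,\s_u)}$ as $T_{(A,\s)}(\,\cdot\,,L_u\,\cdot\,)$ and $T_{(A,\s)}(\,\cdot\,,\Int(u)\,\cdot\,)$, passes to $F_P$, uses the positive-definite matrix model of Remark~\ref{rainbow3} to diagonalize, and compares the signs of the eigenvalues $\lambda_i$ of $L_u$ with those of the ratios $\lambda_i/\lambda_j$ for $\Int(u)$. This is correct and self-contained, but longer; it essentially anticipates the explicit real-closure computations the paper carries out later in Proposition~\ref{not_used} and Lemma~\ref{zero}. The paper's argument buys brevity by leaning on the tensor-product signature formula; yours buys independence from that external input at the cost of working through the matrix model by hand.
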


\begin{proof} 
By \cite[(11.1)]{BOI} the involution $\s_u \ox \prescript{\iota}{}\s$ corresponds to $\ad_{\tasu}$ under the isomorphism $A\ox_K \prescript{\iota}{} A \too \End_K(A)$, where $(\prescript{\iota}{}A, \prescript{\iota}{}\s)$  is the conjugate algebra with involution of $(A,\s)$. It follows from the definition of $\prescript{\iota}{}\s$ that $\sign_P \s =\sign_P \prescript{\iota}{}\s$ 
and from \cite[Remark~4.2]{A-U-Kneb} that
\begin{equation*}
\sign_P \ad_{\tasu} = \sign_P \s_u \cdot \sign_P \s.
\end{equation*}
From \cite{LT} and \cite{Q} we obtain that
\begin{equation*}
|\sign_P \tasu |=\sign_P \ad_{\tasu}.
\end{equation*}
These two equalities  prove the equivalence $(i)\Leftrightarrow (ii)$.  The equivalence $(i) \Leftrightarrow (iii)$ follows from 
Proposition~\ref{dontcare}$(iv)$ and the fact that $n_P=m_P$, since $\s$ is positive at $P$.
\end{proof}

\begin{remark}\label{eqvs1}
If we drop the assumption that   $\s$ is positive at $P$ in Proposition~\ref{eqvs2}, we obtain
(from \eqref{LTQ} and Proposition~\ref{dontcare}$(iv)$)
a
similar sequence of equivalences, but in terms of a different form, namely $T_{(A,\s_u)}$: 
let $\eta$ be a tuple of reference forms for $(A,\s)$, 
$P\in X_F$ and $u\in \sas^\x$. 
The following statements are equivalent:
\begin{enumerate}[$(i)$]
\item The involution $\s_u$ is positive at $P$.
\item The form $T_{(A,\s_u)}$ is positive definite at $P$.
\item $|\sign_P^\eta \qf{u}_\s|=n_P$.
\end{enumerate}
\end{remark}

The equivalence between $(ii)$ and $(iii)$ in Proposition~\ref{eqvs2} can be made more precise:

\begin{prop}\label{not_used}  
Let $\eta$ be a tuple of reference forms for $(A,\s)$,  $P\in X_F$ and $u\in \sas^\x$. Assume that $\s$ is positive at $P$. 
\begin{enumerate}[$(i)$]
\item If  ${1}$ is $\eta$-maximal at $P$, then $\tasu$ is positive definite at $P$ if and only if $u$ is $\eta$-maximal at $P$. 
\item If  ${-1}$ is $\eta$-maximal at $P$, then $\tasu$ is negative definite at $P$ if and only if $u$ is $\eta$-maximal at $P$.
\end{enumerate}
\end{prop}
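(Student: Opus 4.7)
The starting point is Proposition~\ref{eqvs2}: it already gives that $\tasu$ is PD or ND at $P$ if and only if $u$ or $-u$ is $\eta$-maximal at $P$. What is missing is the precise matching of signs, and the hypothesis that $\pm 1$ is $\eta$-maximal is exactly what makes this matching possible.

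First I would reduce (i) to a single implication. Because $u\in\sas^\x$ is invertible, $\qf{u,-u}_\s$ is hyperbolic, so $\sign_P^\eta\qf{-u}_\s=-\sign_P^\eta\qf{u}_\s$. Combined with $m_P=n_P>0$ (Remark~\ref{rainbow}, since $P\in X_\s\subseteq \wt X_F$), this shows that $u$ and $-u$ cannot both be $\eta$-maximal at $P$; likewise $\tasu$ and $-\tasu$ cannot both be PD. So (i) reduces to proving ``$u$ $\eta$-maximal $\Rightarrow$ $\tasu$ PD at $P$''---the converse then follows by eliminating the alternative $-u$ via Proposition~\ref{eqvs2}. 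Part (ii) reduces analogously, with PD replaced by ND.

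For the sign-matching I would extend scalars to $F_P$. By Remark~\ref{rainbow3}, after a suitable change of basis we may identify $(A\ox_F F_P, \s\ox\id)$ with $(M_{n_P}(D_P), \bt)$ (any positive definite hermitian form over $F_P$ is congruent to the identity). Under this identification, $u$ corresponds to an invertible hermitian matrix $U\in M_{n_P}(D_P)$, and unwinding the construction of $\sign_P^\eta$ via the Morita equivalence \eqref{diagram} gives $\sign_P^\eta\qf{u}_\s=\epsilon\cdot\sign(U)$ for a single $\epsilon\in\{\pm 1\}$ (depending only on $\eta$), where $\sign(U)$ is the Sylvester signature. Specialising to $u=1$ (so $U=I$ and $\sign(I)=n_P$) and using the hypothesis that $1$ is $\eta$-maximal at $P$ (so $\sign_P^\eta\qf{1}_\s=m_P=n_P$) forces $\epsilon=+1$. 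Hence $u$ is $\eta$-maximal at $P$ iff $U$ is positive definite.

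It then remains to show that $\tasu$ is PD at $P$ whenever $U$ is positive definite. Under our identification, $\tasu\ox F_P$ becomes the form $T_U(X,Y)=\Trd(\bt(X)UY)$ on $M_{n_P}(D_P)$; writing $U=\bt(C)C$ for some invertible $C\in M_{n_P}(D_P)$, the substitution $X\mapsto CX$ yields an isometry from $T_U$ to the standard trace form $T_I(X,Y)=\Trd(\bt(X)Y)$. But $T_I$ is precisely $\tas\ox F_P$ under our identification, and is positive definite because $\s$ is positive at $P$. So $\tasu$ is PD at $P$, completing (i). Case (ii) follows by the same argument applied to a reference tuple with flipped sign convention, which converts the hypothesis ``$-1$ is $\eta$-maximal'' into ``$1$ is $\eta'$-maximal''; PD is then replaced by ND throughout. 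The main obstacle is the sign alignment in the third paragraph---without the hypothesis on $\pm 1$, one cannot resolve the PD/ND ambiguity left open by Proposition~\ref{eqvs2}.
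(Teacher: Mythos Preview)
Your argument is correct and follows essentially the same route as the paper: reduce to one implication via Proposition~\ref{eqvs2}, extend scalars to $F_P$, use the $\eta$-maximality of $1$ to pin down the sign $\epsilon$ in $\sign_P^\eta\qf{u}_\s=\epsilon\,\sign(U)$, and finish with a trace computation. Your normalisation $\Phi_P=I$ (legitimate since a positive definite hermitian matrix over $D_P$ is congruent to the identity) and your observation that $T_I=\tas\ox F_P$ slightly streamline the paper's version, which keeps a general positive definite $\Phi$, writes $\Phi=\Psi^2$, and verifies $\Trd(\ovl{Y}^t\Delta Y)\ge 0$ by hand.
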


\begin{proof} 
$(ii)$ follows from $(i)$ upon replacing $\eta$ by $-\eta$ and $u$ by $-u$. Thus, it suffices to prove $(i)$.

Observe that by Corollary~\ref{rainbow2} and Remark~\ref{rainbow}, $\s$ positive at $P$ implies that either $1$ or $-1$ is $\eta$-maximal at $P$. 
Also note that the assumption on $\s$ implies that $P\in \wt X_F$.  

Assume that ${1}$ is $\eta$-maximal at $P$. 
By Proposition~\ref{eqvs2} and since $T_{(A,\s, -u)} = -\tasu$, we only need to show the sufficient condition in $(i)$. Thus, assume that $u$ is $\eta$-maximal at $P$. 
 It is not hard to show that $\tasu \ox F_P = T_{(A\ox_F F_P, \s\ox \id, u\ox 1)}$. 
 We may therefore assume  that $F$ is real closed and, with reference to
Section~\ref{Morita}, we have $(A,\s)\cong (M_\ell(D), \ad_\Phi)$ for some
$\ell\in \N$, where $D$ is one of $F$, $F(\sqrt{-1})$ or $(-1,-1)_F$, equipped
with the conjugation involution $\bbar$ (which is the identity on $F$), and
$\Phi$ is some matrix  in $\Sym_\ve(M_\ell(D), \bbar^t)$. Observe that
$\ve = \ve_P$  and  $\ell=n_P$ since $F = F_P$, that $\ve_P = 1$ since $P \in \wt X_F$, and that $m_P=n_P$ since 
$P\in X_\s$.

Under the isomorphism  $(A,\s)\cong (M_\ell(D), \ad_\Phi)$, the element $u$ corresponds to a matrix $U \in \Sym(M_\ell(D), \ad_\Phi)^\x$, 
$\tasu$ corresponds to $T_{(M_\ell(D), \ad_\Phi, U)}$ and 
the tuple $\eta$ corresponds to a tuple $J$. 
By Remark~\ref{rainbow3} we may assume that $\Phi$ is positive definite. 
Since  $F$ is real closed, there exists an  invertible matrix $\Psi \in M_\ell(D)$ such that $\ovl{\Psi}^t=\Psi$ and  $\Phi = \Psi^2$. 

By  \eqref{diagram}, \eqref{seq} and the definition of signature, 
there exists $\delta \in \{-1,1\}$ such that for every matrix $B \in \Sym(M_\ell(D), \ad_\Phi)^\x$,
\[\sign^J \qf{B}_{\ad_\Phi} = \delta\sign (\Phi^{-1} B),\]
where $\Phi^{-1} B \in \Sym(M_\ell(D), \bbar^t)^\x$. 
By the asssumption on $1$, $\sign^\eta \qf{1}_\s>0$, which translates to 
$\sign^J \qf{I_\ell}_{\ad_\Phi}= \delta \sign (\Phi^{-1}) >0$, where $I_\ell$ denotes the $\ell\x \ell$ identity matrix. Since 
$\sign \Phi^{-1}  = \sign \Phi >0$, we deduce that $\delta=1$ so that $\sign^J \qf{B}_{\ad_\Phi}=\sign (\Phi^{-1} B)$.

By hypothesis $ \sign^\eta \qf{u}_\s=\ell$.   Thus, applying the above with $B=U$ yields $\Phi^{-1} U \in \Sym(M_\ell(D), \bbar^t)^\x$ and
\[\sign (\Phi^{-1} U)=\sign^J \qf{U}_{\ad_\Phi} = \sign^\eta \qf{u}_\s=\ell\] 
(cf. \cite[Theorem~4.2]{A-U-prime} for the second equality), and thus that $\Phi^{-1}U$ is positive definite. Therefore we can write $\Phi^{-1} U= \ovl{\Gamma}^t \Delta \Gamma$, where $\Gamma$ is invertible in $M_\ell(D)$ and $\Delta \in M_\ell(D)$ is a diagonal matrix with positive diagonal coefficients in $F=\Sym(D,\bbar)$. 

Finally, since $u$ is invertible, $\tasu$ is nonsingular and so in order to  
show that $\tasu$ is positive definite it suffices to show that $T_{(M_\ell(D), \ad_\Phi, U)} (X,X)\geq 0$ 
for every $X\in M_\ell(D)$. We have
\begin{align*}
T_{(M_\ell(D), \ad_\Phi, U)} (X,X) &= \Trd_{M_\ell(D)} (\ad_\Phi(X)U X)\\
&=\Trd_{M_\ell(D)} (\Phi \ovl{X}^t \Phi^{-1} U X)\\
&=\Trd_{M_\ell(D)} (\Psi^2 \ovl{X}^t \Phi^{-1} U X)\\
&=\Trd_{M_\ell(D)} (\Psi \ovl{X}^t \Phi^{-1} U X\Psi)\\
&=\Trd_{M_\ell(D)} ( (\ovl{X\Psi})^t \Phi^{-1} U X\Psi )\\
&=\Trd_{M_\ell(D)} ( (\ovl{X\Psi})^t  \ovl{\Gamma}^t \Delta \Gamma    X\Psi )\\
&=\Trd_{M_\ell(D)} ( (\ovl{\Gamma X\Psi})^t   \Delta (\Gamma    X\Psi) )\\
&=\Trd_{M_\ell(D)} ( \ovl{Y}^t   \Delta Y )\\
&\geq 0,
\end{align*}
where  $Y=\Gamma    X\Psi$ and  the inequality follows by  direct computation. 
\end{proof}

We record the next result for future use:

\begin{prop}\label{Brauer_pos} 
Let $(A,\s)$ be an $F$-algebra with involution such that $X_\s\not=\varnothing$.
Then there exists an $F$-linear involution $\tau$ on $D$, of the same type as $\s$, such that $X_\s \subseteq X_\tau$.
\end{prop}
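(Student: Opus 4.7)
My plan is to diagonalize the scaling matrix $\Phi$ over $(D,\vt)$ and show that one of its diagonal entries, used to twist $\vt$, yields an $F$-linear involution on $D$ that is positive wherever $\s$ is.

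First, the hypothesis $X_\s \neq \varnothing$ excludes the exceptional case of Lemma~\ref{eps}: any $P \in X_\s$ satisfies $\sign_P \s = n$ by \eqref{LTQ}, so \eqref{h_adh} applied to $h = \qf{1}_\s$ gives $|\sign_P^\eta \qf{1}_\s| = n/\lambda_P = n_P \neq 0$, whence $P \in \wt X_F$; but Lemma~\ref{rescue} forces $\wt X_F = \varnothing$ in the exceptional case. I therefore fix $\vt$ on $D$ of the same type as $\s$ (so $\ve = 1$), write $(A,\s) \cong (M_\ell(D), \ad_\Phi)$ with $\vt^t(\Phi) = \Phi$, and use Lemma~\ref{knus2} to produce an invertible $G \in M_\ell(D)$ and $d_1,\ldots,d_\ell \in \Sym(D,\vt)^\times$ with $\vt(G)^t \Phi G = \diag(d_1,\ldots,d_\ell)$. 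Inverting both sides shows that $\Phi^{-1}$ is congruent, via $H := \vt^t(G^{-1})$, to $\diag(d_1^{-1},\ldots,d_\ell^{-1})$.

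Next I propose the candidate $\tau := \vt_{d_1^{-1}} = \Int(d_1) \circ \vt$, an $F$-linear involution on $D$ of the same type as $\vt$, hence as $\s$. To verify $X_\s \subseteq X_\tau$, fix $P \in X_\s$ and trace $\qf{1}_\s$ through \eqref{diagram} with $\eta$ transported to a reference tuple $\mu$ for $(D,\vt)$ as in the remark after \eqref{diagram}: it passes to $\qf{I_\ell}_{\ad_\Phi}$, then to $\qf{\Phi^{-1}}_{\vt^t}$, and finally (by the congruence above together with collapsing) to the form $\qf{d_1^{-1},\ldots,d_\ell^{-1}}_\vt$ over $(D,\vt)$. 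Since signatures are preserved along \eqref{diagram} (cf.~\cite[Theorem~4.2]{A-U-prime}), combining with \eqref{h_adh} applied to $h=\qf{1}_\s$ yields
\[
\Big|\sum_{i=1}^\ell \sign_P^\mu \qf{d_i^{-1}}_\vt\Big| \;=\; |\sign_P^\eta \qf{1}_\s| \;=\; \frac{\sign_P \s}{\lambda_P} \;=\; n_P \;=\; \ell k, \qquad k := \deg D / \lambda_P.
\]

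The analogue of Proposition~\ref{dontcare}$(iii)$ for $(D,\vt)$, which shares $\lambda_P$ and $\Nil[\cdot]$ with $(A,\s)$ by Brauer equivalence and agreement of involution types, bounds each summand above by $k$ in absolute value, so the displayed equality forces $|\sign_P^\mu \qf{d_i^{-1}}_\vt| = k$ for every $i$. Applied to $i=1$, the analogue of Proposition~\ref{dontcare}$(iv)$ for $(D,\vt)$ then gives $\sign_P \tau = \deg D$, i.e.\ $\tau$ is positive at $P$. As $P \in X_\s$ was arbitrary, $X_\s \subseteq X_\tau$. The main obstacle is tracking the reference tuple and isometries through the three Morita arrows of \eqref{diagram} carefully enough that the signature computation transfers cleanly to $(D,\vt)$; once that is aligned, the arithmetic is forced by the maximality of the signature of $\s$ together with the dimension-one signature bound for $(D,\vt)$.
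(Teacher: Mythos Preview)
Your proof is correct and follows essentially the same route as the paper: diagonalize the form corresponding to $\qf{1}_\s$ under the Morita equivalence \eqref{diagram}, then use that maximality of $|\sign_P^\eta \qf{1}_\s|$ together with the dimension-one bound over $(D,\vt)$ forces each diagonal entry to have maximal signature, whence the twisted involution on $D$ is positive at $P$. The only cosmetic differences are that you diagonalize $\Phi$ explicitly via Lemma~\ref{knus2} and track absolute values using Proposition~\ref{dontcare}$(iv)$, whereas the paper invokes the Morita image $\qf{a_1,\ldots,a_\ell}_\vt$ directly and fixes the sign from the outset by choosing $\eta=(\qf{1}_\s,\ldots)$ and appealing to Corollary~\ref{rainbow2}; your $d_i^{-1}$ are precisely the paper's $a_i$.
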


\begin{proof} Write $(A,\s)\cong (M_{\ell}(D), \ad_\Phi)$ with $\vt$, $\ve$ and $\Phi$ as in Section~\ref{Morita}. 
Since $X_\s\not=\varnothing$, we have $\wt X_F\not=\varnothing$. We may therefore assume that $\ve=1$ by Lemmas~\ref{rescue} and \ref{eps} and thus that
$\vt$ is of the same type as $\s$.

Consider the hermitian form $\qf{1}_\s$. It corresponds to an $\ell$-dimensional hermitian form $\qf{a_1,\ldots, a_\ell}_\vt$ via the isomorphisms in \eqref{diagram}.  We show that $X_\s\subseteq X_\tau$, where $\tau$ is the involution
$\vt_{a_1}$ on $D$.

Let $P\in X_\s$.
Let $\eta$ be a tuple of reference forms for $(A,\s)$ of the form $(\qf{1}_\s,\ldots)$, cf. Remark~\ref{ricola}.
The assumption $\sign_P \s=n=\deg A$ is equivalent with $\sign^\eta_P \qf{1}_\s=n_P$ by Corollary~\ref{rainbow2}. 
Since the form $\qf{1}_\s$ corresponds to  $\qf{a_1,\ldots, a_\ell}_\vt$,  we have
$\sign_P^{(g\circ s \circ f_*)(\eta)} \qf{a_1,\ldots, a_\ell}_\vt =n_P$ by \cite[Theorem~4.2]{A-U-prime}. Since $\deg D=n/\ell$, the signature of a one-dimensional hermitian form over $(D,\vt)$ is bounded by $n_P/\ell$ (since such a form
gives rise to a matrix in $M_{n_P/\ell}(D_P)$ during the signature computation).
It follows that $\sign_P^{(g\circ s \circ f_*)(\eta)} \qf{a_i}_\vt =n_P/\ell$ for all $i\in \{1,\ldots, \ell\}$. By Corollary~\ref{rainbow2}, the involution $\vt_{a_i}$ on $D$ is positive at $P$ for all $i\in \{1,\ldots, \ell\}$. In particular, $P\in X_\tau$.
Observe that since $a_1\in \Sym(D,\vt)^\x$, the involution $\tau$ is of the same type as $\s$.
\end{proof}

\subsection{A theorem of Procesi and Schacher}

Recall that we have an isomorphism $f: (A,\s) \to (M_\ell(D), \Int(\Phi) \circ \vt^t)$. It induces an isomorphism
of $F_P$-algebras with involution
\[f\ox \id: (A\ox_F F_P,\s\ox \id) \to (M_\ell(D)\ox_F F_P , (\Int(\Phi) \circ \vt^t)\ox \id).\] 
Consider an isomorphism $\alpha_P: M_\ell(D)\ox_F F_P\to M_{n_P}(D_P)$ and let $\Int(\Psi_P) \circ \bbar^t$
be the involution on $M_{n_P}(D_P)$ that corresponds to the involution $(\Int(\Phi) \circ \vt^t)\ox \id$ 
 under $\alpha_P$, where $\Psi_P \in \Sym_{\ve_P}( M_{n_P}(D_P), \bbar^t)^\x$.
We also define $f_P= \alpha_P \circ (f\ox \id)$. 

Note that if $P\in X_\s$, then in particular $P \in \wt X_F$, and thus $\ve_P=1$ and $(D_P, \bbar)$ is one of $(F_P, \id)$, $(F_P(\sqrt{-1}), \bbar)$, or
$((-1,-1)_{F_P}, \bbar)$, cf. Section~\ref{sec:sign}.

\begin{lemma}\label{zero} 
Let $P\in X_\s$ and $u\in \Sym(A,\s)$. 
Then $T_{(A, \s, u)}$ is positive semidefinite at $P$ if and only if $T_{(M_{n_P}(D_P), \bbar^t, \Psi_P^{-1} f_P(u\ox 1))}$ is positive semidefinite at the unique ordering on $F_P$.
\end{lemma}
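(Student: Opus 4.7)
The plan is to reduce the statement to an explicit isometry of bilinear/hermitian forms over $F_P$ and then to exhibit that isometry by direct calculation.

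First, I would extend scalars from $F$ to $F_P$: for a symmetric bilinear (or hermitian) form over $F$ (respectively $K$), being positive semidefinite at $P$ is equivalent to being positive semidefinite at the unique ordering of $F_P$ after scalar extension. By the standard compatibility of the reduced trace and of the involution with base change there is a natural isometry
\[T_{(A,\s,u)}\ox_F F_P \simeq T_{(A\ox_F F_P,\,\s\ox\id,\,u\ox 1)}.\]
Transporting along the $F_P$-algebra-with-involution isomorphism $f_P\colon(A\ox_F F_P,\s\ox\id)\iso(M_{n_P}(D_P),\Int(\Psi_P)\circ\bbar^t)$, the right-hand side is further isometric to $T_{(M_{n_P}(D_P),\Int(\Psi_P)\circ\bbar^t,\,U)}$, where $U:=f_P(u\ox 1)$. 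The task is thus to compare this with $T_{(M_{n_P}(D_P),\bbar^t,\Psi_P^{-1}U)}$.

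Second, since $P\in X_\s$, I would invoke Remark~\ref{rainbow3} together with the identity $\ad_{\Psi_P}=\ad_{-\Psi_P}$ to assume that $\Psi_P$ is positive definite. Because $F_P$ is real closed and $D_P$ is one of $F_P$, $F_P(\sqrt{-1})$, or $(-1,-1)_{F_P}$, the spectral theorem in the relevant Hermitian setting produces an invertible matrix $A\in M_{n_P}(D_P)$ satisfying $\bbar{A}^t=A$ and $A^2=\Psi_P$.

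Third, a direct computation using the cyclic property of $\Trd$ and the hermiticity of $A$ gives
\begin{align*}
T_{(M_{n_P}(D_P),\Int(\Psi_P)\circ\bbar^t,U)}(X,Y) &= \Trd(\Psi_P\bbar{X}^t\Psi_P^{-1}UY) \\
&= \Trd(A\bbar{X}^t\Psi_P^{-1}UYA) \\
&= \Trd(\bbar{XA}^t\,\Psi_P^{-1}U\,(YA)) \\
&= T_{(M_{n_P}(D_P),\bbar^t,\Psi_P^{-1}U)}(XA,YA).
\end{align*}
Since right multiplication by the invertible matrix $A$ is an $F_P$-linear (respectively $K\ox_F F_P$-linear) bijection of $M_{n_P}(D_P)$, the two trace forms are isometric, and in particular one is positive semidefinite at the unique ordering of $F_P$ if and only if the other is.

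The main obstacle I expect is securing the positive definite $\Psi_P$ and its Hermitian square root $A$; this is precisely where the hypothesis $P\in X_\s$ is used, via Remark~\ref{rainbow3}, after which the rest of the argument is a formal trace manipulation.
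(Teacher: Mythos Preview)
Your argument is correct and essentially identical to the paper's own proof: both extend scalars to $F_P$, use Remark~\ref{rainbow3} to assume $\Psi_P$ is positive definite, take a Hermitian square root (the paper calls it $\Omega_P$), and then perform the same cyclic trace manipulation to exhibit the isometry via right multiplication by that square root. The only cosmetic issue is that you have named the square root $A$, which clashes with the algebra $A$; renaming it (e.g.\ to $\Omega$) would avoid confusion.
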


\begin{proof} Note that $\ovl{\Psi_P}^t=\Psi_P$. Since $\s$ is positive at $P$, 
we may assume by Remark~\ref{rainbow3} that
$\Psi_P$ is a positive definite matrix over $D_P$. 
 Thus $\Psi_P$ has a square 
root in $M_{n_P}(D_P)$ 
and we write $\Psi_P=\Omega_P^2$ with $\overline{\Omega_P}^t=\Omega_P$. The form $T_{(A, \s, u)}$ is positive semidefinite at $P$ if and only if it remains so 
over $F_P$. We have, for $x\in A\ox_F F_P$,
\begin{align*}
(T_{(A, \s, u)}\ox F_P) (x,x) &= T_{(A\ox F_P, \s\ox\id, u\ox 1)}(x,x)\\
&= \Trd_{A\ox F_P} \bigl((\s\ox \id) (x) (u\ox 1) x\bigr)\\
&= \Trd_{M_{n_P}(D_P)}(\Psi_P \ovl{f_P(x)}^t \Psi_P^{-1} f_P(u\ox 1) f_P(x))\\
&= \Trd_{M_{n_P}(D_P)}(\Omega_P^2 \ovl{f_P(x)}^t \Psi_P^{-1} f_P(u\ox 1) f_P(x))\\
&= \Trd_{M_{n_P}(D_P)}(\Omega_P \ovl{f_P(x)}^t \Psi_P^{-1} f_P(u\ox 1) f_P(x) \Omega_P)\\
&= \Trd_{M_{n_P}(D_P)}(\ovl{y}^t \Psi_P^{-1} f_P(u\ox 1) y)\\
&=T_{(M_{n_P}(D_P), \bbar^t, \Psi_P^{-1} f_P(u\ox 1))}(y,y),
\end{align*}
where $y= f_P(x) \Omega_P$. The statement follows.
\end{proof}

\begin{lemma}\label{one} 
Let $P\in X_\s$ and $u\in \Sym(A,\s)$. 
Then $\tasu$ is positive semidefinite at $P$ if and only of $T_{(M_\ell(D), \vt^t, \Phi^{-1} f(u))}$ is positive semidefinite at $P$.
\end{lemma}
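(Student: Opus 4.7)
The plan is to parallel the proof of Lemma~\ref{zero}: reduce the comparison to a statement over $F_P$, then produce a suitable change of variables based on a square root of $\Phi$.

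First I would use that the $F$-algebra isomorphism $f:(A,\s)\to(M_\ell(D),\ad_\Phi)$ preserves the reduced trace, so $\tasu(x,y) = T_{(M_\ell(D),\ad_\Phi,f(u))}(f(x),f(y))$; consequently $\tasu$ is positive semidefinite at $P$ if and only if $T_{(M_\ell(D),\ad_\Phi,f(u))}$ is. Since positive semidefiniteness is detected after base change to the real closure $F_P$, it then remains to show that $T_{(M_\ell(D),\ad_\Phi,f(u))}\ox F_P$ and $T_{(M_\ell(D),\vt^t,\Phi^{-1}f(u))}\ox F_P$ have the same positive semidefinite behaviour at the unique ordering of $F_P$.

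The heart of the argument is to produce $\Omega\in M_\ell(D)\ox_F F_P$ with $(\vt^t\ox\id)(\Omega)=\Omega$ and $\Omega^2=\Phi\ox 1$. Granting such $\Omega$, cyclicity of $\Trd$ together with $\vt^t(\Omega)=\Omega$ gives, for every $x\in M_\ell(D)\ox F_P$,
\[T_{(M_\ell(D)\ox F_P,\,\ad_\Phi\ox\id,\,f(u)\ox 1)}(x,x)=\Trd\bigl(\vt^t(x\Omega)\,\Phi^{-1}f(u)\,(x\Omega)\bigr)=T_{(M_\ell(D)\ox F_P,\,\vt^t\ox\id,\,\Phi^{-1}f(u)\ox 1)}(x\Omega,x\Omega),\]
so right-multiplication by the invertible $\Omega$ provides the desired $F_P$-isometry between the two extended forms, exactly as in Lemma~\ref{zero}.

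The main obstacle will be establishing the existence of $\Omega$, which is precisely where the hypothesis $P\in X_\s$ enters. By Corollary~\ref{rainbow2}, $P\in X_\s$ forces $\sign_P^\eta\qf{1}_\s=n_P$ for any tuple of reference forms $\eta$ beginning with $\qf{1}_\s$; tracing $\qf{1}_\s$ through the Morita chain \eqref{diagram} shows that $\qf{\Phi^{-1}}_{\vt^t}\ox F_P$ corresponds, upon collapsing, to a positive definite hermitian form over $(D_P,\bbar)$. I would then choose $\alpha_P:M_\ell(D)\ox F_P\iso M_{n_P}(D_P)$ so that it conjugates $\vt^t\ox\id$ directly to $\bbar^t$; this is possible by first trivialising $\vt\ox\id$ on $D\ox F_P$ via Skolem--Noether and then extending to matrices via the block-transpose identification. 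Under such $\alpha_P$ the matrix $\Phi_P':=\alpha_P(\Phi\ox 1)$ is $\bbar^t$-symmetric and, because $\ad_\Phi\ox\id$ corresponds simultaneously to $\ad_{\Phi_P'}$ and to $\ad_{\Psi_P}$, one has $\Phi_P'=\mu\Psi_P$ for some scalar $\mu\in F_P^\x$. Choosing $\Psi_P$ positive definite as in Remark~\ref{rainbow3} and combining with the positive definiteness noted above forces $\mu>0$, so $\Phi_P'$ is itself positive definite. It therefore admits a positive definite $\bbar^t$-symmetric square root over $F_P$, and pulling this back through $\alpha_P^{-1}$ furnishes the required $\Omega$.
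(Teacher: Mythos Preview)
Your overall strategy---transport via $f$, extend scalars to $F_P$, then exhibit an isometry between the two trace forms using a $(\vt^t\ox\id)$-symmetric square root $\Omega$ of $\Phi\ox1$---is sound and closely parallels Lemma~\ref{zero}. The gap lies in the claim that $\alpha_P$ can be chosen to carry $\vt^t\ox\id$ \emph{directly} to $\bbar^t$. Skolem--Noether only gives an algebra isomorphism $M_\ell(D)\ox_F F_P\cong M_{n_P}(D_P)$; under any such map, $\vt^t\ox\id$ becomes $\Int(\Lambda_P)\circ\bbar^t$ for some hermitian $\Lambda_P$, and one can arrange $\Lambda_P=I_{n_P}$ precisely when $\vt$ is positive at $P$. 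The hypothesis $P\in X_\s$ does not force $P\in X_\vt$. For instance, take $F=\Q$, $D=(2,3)_\Q$, $\vt=\Int(i)\circ\bbar$, $\ell=1$ and $\Phi=j\in\Sym(D,\vt)^\x$: at the real place one finds $\vt\ox\id=\ad_J$ on $M_2(\R)$ with $J=\bigl(\begin{smallmatrix}0&1\\1&0\end{smallmatrix}\bigr)$ indefinite (so $P\notin X_\vt$), while $\s=\Int(\Phi)\circ\vt$ satisfies $\s\ox\id=\ad_{\Phi'J}$ with $\Phi'J=\bigl(\begin{smallmatrix}1&0\\0&3\end{smallmatrix}\bigr)$ positive definite (so $P\in X_\s$). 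Here $\Phi'=\bigl(\begin{smallmatrix}0&1\\3&0\end{smallmatrix}\bigr)$ admits no $\ad_J$-symmetric square root in $M_2(\R)$, so the $\Omega$ you need does not exist.

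The paper supplies exactly the missing ingredient: its proof begins by invoking Proposition~\ref{Brauer_pos} to replace $\vt$ by an involution on $D$ of the same type with $P\in X_\vt$, and then applies Lemma~\ref{zero} twice---once to $(A,\s)$ using $P\in X_\s$, once to $(M_\ell(D),\vt^t)$ using $P\in X_\vt$---reducing both trace forms to the same form over $(M_{n_P}(D_P),\bbar^t)$. If you insert this appeal to Proposition~\ref{Brauer_pos} at the outset, your square-root argument then also goes through, since $\Lambda_P$ may be taken to be the identity and $\alpha_P(\Phi\ox1)$ becomes (up to sign) a positive definite $\bbar^t$-hermitian matrix.
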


\begin{proof}
 Let $P \in X_\s$. By Proposition~\ref{Brauer_pos} 
we may choose the involution $\vt$ on $D$ such that $P \in X_\vt$. In particular, $X_\vt\not=\varnothing$ and thus 
$\wt X_F\not=\varnothing$.
By Lemma~\ref{rescue} we have $\ve=1$, i.e. $\Phi
\in\Sym(M_\ell(D), \vt^t)$.
Let $ \Int(\Lambda_P)\circ \bbar^t$ be the involution on $M_{n_P} (D_P)$, corresponding to the involution $\vt^t \ox \id$
on $M_\ell(D)\ox_F F_P$ under the isomorphism $\alpha_P$, where $\Lambda_P$ is  some 
matrix  in $\Sym_\delta(M_{n_P}(D_P),  \bbar^t)^\x$. By Remark~\ref{rainbow3} we have $\delta=1$ since 
$P\in X_\vt=X_{\vt^t}$.
The map $\alpha_P$ induces
an isomorphism of algebras with involution
\begin{equation}\label{bubble1}
(M_\ell(D)\ox_F F_P, \vt^t \ox \id) \cong (M_{n_P} (D_P), \Int(\Lambda_P)\circ \bbar^t).
\end{equation}
Since $P \in X_\vt$ we may assume that $\Lambda_P$ is positive definite by Remark~\ref{rainbow3}. 
Using the isomorphisms $f$  and $\alpha_P$  we have 
\begin{align*}
(A\ox_F F_P, \s \ox \id) &\cong (M_\ell(D)\ox_F F_P,  \Int (\Phi \ox 1) \circ (\vt^t \ox\id) )\\
&\cong (M_{n_P}(D_P), \Int (\Phi_P) \circ \Int(\Lambda_P)\circ \bbar^t) \\
&= (M_{n_P}(D_P),  \Int (Z_P)\circ \bbar^t), 
\end{align*}
where $\Phi_P=\alpha_P(\Phi\ox 1)$ and $Z_P= \Phi_P \Lambda_P$. 
In other words, $f_P=\alpha_P \circ (f\ox \id)$ induces an isomorphism of $F_P$-algebras with involution
\begin{equation}\label{bubble2}
(A\ox_F F_P, \s \ox \id) \cong  (M_{n_P}(D_P),  \Int (Z_P)\circ \bbar^t).
\end{equation}
Since $P \in X_\s$, $Z_P$ is positive or negative definite  (cf. Remark~\ref{rainbow3})
and up to replacing $\Phi$ by $-\Phi$ we may assume it is positive definite. 
By Lemma~\ref{zero} and \eqref{bubble2}, $\tasu$ is positive semidefinite at $P$ if and only if
$T_{(M_{n_P}(D_P),\bbar^t ,Z_P^{-1} f_P(u\ox 1))}$ is positive semidefinite. By Lemma~\ref{zero} and \eqref{bubble1},
$T_{(M_\ell(D), \vt^t, \Phi^{-1}f(u))}$ is  positive semidefinite at $P$ if and 
only if $T_{(M_{n_P}(D_P), \bbar^t, \Lambda_P^{-1} \alpha_P ( (\Phi^{-1}f(u)) \ox 1)}$ is positive semidefinite. The 
statement follows since 
\begin{align*}
\Lambda_P^{-1} \alpha_P ( (\Phi^{-1}f(u)) \ox 1) &= \Lambda_P^{-1} \alpha_P ( (\Phi^{-1}\ox 1) (f(u) \ox 1))\\ 
&= \Lambda_P^{-1} \Phi_P^{-1} \alpha_P   (f(u) \ox 1)\\
&= Z_P^{-1} f_P   (u \ox 1).\qedhere
\end{align*}
\end{proof}

\begin{lemma}\label{two} 
With notation as in \eqref{iso} we have
\[T_{(M_\ell(D), \vt^t, \Phi^{-1} f(u))} \simeq \ell \x( T_{(D, \vt, u_1)} \perp \cdots \perp T_{(D, \vt, u_k)} \perp 0 \cdots \perp 0) \]
when $(D,\vt,\ve)\not=(F, \id_F, -1)$. 
\end{lemma}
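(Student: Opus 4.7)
\medskip

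\noindent\textbf{Proof proposal.} The plan is to perform the computation in two steps: first conjugate away the matrix $G$ coming from \eqref{iso}, and then decompose the resulting trace form entry-by-entry.

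First I would use \eqref{iso} to solve for $\Phi^{-1}f(u)$, obtaining
\[
\Phi^{-1}f(u) \;=\; \vt^{t}(G^{-1})\,V\,G^{-1}, \qquad V:=\diag(u_1,\ldots,u_k,0,\ldots,0).
\]
Since $\vt^{t}$ is an involution on $M_\ell(D)$ (it reverses products), one has $\vt^{t}(x)\vt^{t}(G^{-1})=\vt^{t}(G^{-1}x)$, so for all $x,y\in M_\ell(D)$
\[
\Trd_{M_\ell(D)}\bigl(\vt^{t}(x)\,\Phi^{-1}f(u)\,y\bigr)=\Trd_{M_\ell(D)}\bigl(\vt^{t}(G^{-1}x)\,V\,(G^{-1}y)\bigr).
\]
Hence the $F$-linear (in fact $K$-linear) automorphism $x\mapsto Gx$ of $M_\ell(D)$ induces an isometry of involution trace forms
\[
T_{(M_\ell(D),\vt^{t},\,V)}\;\simeq\;T_{(M_\ell(D),\vt^{t},\,\Phi^{-1}f(u))}.
\]
This reduces the lemma to computing the trace form of the diagonal element $V$.

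Next I would expand $T_{(M_\ell(D),\vt^{t},V)}$ directly using matrix entries. Writing $\Trd_{M_\ell(D)}=\Trd_D\circ\tr$ (where $\tr$ is the ordinary matrix trace) and tracking indices in $\vt^{t}(x)Vy$, the diagonality of $V$ collapses the triple sum to
\[
T_{(M_\ell(D),\vt^{t},V)}(x,y)\;=\;\sum_{p=1}^{k}\sum_{i=1}^{\ell}\Trd_D\bigl(\vt(x_{pi})\,u_p\,y_{pi}\bigr)\;=\;\sum_{p=1}^{k}\sum_{i=1}^{\ell}T_{(D,\vt,u_p)}(x_{pi},y_{pi}).
\]
The entries $x_{pi}$ with $p>k$ contribute nothing, while for each fixed $p\le k$ the index $i$ ranges over $\{1,\ldots,\ell\}$, producing $\ell$ orthogonal copies of $T_{(D,\vt,u_p)}$. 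Collecting these orthogonal summands and the $\ell(\ell-k)$ vanishing contributions gives the stated decomposition
\[
T_{(M_\ell(D),\vt^{t},V)}\;\simeq\;\ell\x\bigl(T_{(D,\vt,u_1)}\perp\cdots\perp T_{(D,\vt,u_k)}\perp 0\perp\cdots\perp 0\bigr),
\]
and a dimension check ($\ell^{2}\dim_F D$ on both sides) confirms that the right-hand side contains exactly $\ell-k$ zero summands.

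The proof is essentially bookkeeping; the only real conceptual point is the change of variables by $G$ in the first step, which relies on $\vt^{t}$ being an involution on $M_\ell(D)$. The excluded case $(D,\vt,\ve)=(F,\id_F,-1)$ is precisely the one where \eqref{iso} produces $2{\times}2$ skew blocks rather than scalars on the diagonal, which would break the clean entry-by-entry decomposition; thus the restriction appears not as an obstacle in the argument but as a hypothesis that allows the argument to be carried out.
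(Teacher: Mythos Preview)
Your proof is correct and follows essentially the same approach as the paper's. The paper's proof is terser: it notes that \eqref{iso} immediately gives the isometry $T_{(M_\ell(D),\vt^t,\Phi^{-1}f(u))}\simeq T_{(M_\ell(D),\vt^t,\diag(u_1,\ldots,u_k,0,\ldots,0))}$ (your change-of-variables step), and then invokes the canonical decomposition $M_\ell(D)\cong D^\ell\oplus\cdots\oplus D^\ell$ into column modules to carry out the matrix computation you spell out entrywise.
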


\begin{proof} It follows from \eqref{iso} that $T_{(M_\ell(D), \vt^t, \Phi^{-1} f(u))} \simeq T_{(M_\ell(D), \vt^t, \diag(u_1,\ldots, u_k,0,\ldots,0)   )}$. The statement follows from a direct matrix computation starting from the canonical decomposition of $M_\ell(D)$ into simple $M_\ell(D)$-modules: $M_\ell (D) \cong 
\underbrace{D^\ell \oplus\cdots \oplus D^\ell}_{\ell \textrm{ copies}}$. 
\end{proof}

\begin{lemma} \label{lemma_x}
Assume that $\tas\simeq \la b_1,\ldots, b_{m}\ra_\iota$ with all $b_i \in F^\x$. Then 
\[X_\s=H(b_1,\ldots, b_{m}).\]
\end{lemma}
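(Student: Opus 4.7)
The proof essentially unfolds definitions and uses the diagonalization hypothesis.

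The plan is to chain together a sequence of equivalences starting from $P \in X_\sigma$ and ending at $P \in H(b_1,\ldots,b_m)$. First I would recall from Definition~\ref{P-S-pos}$(i)$ that $P \in X_\sigma$ means precisely that the form $T_{(A,\sigma)}$ is positive semidefinite at $P$. Since $T_{(A,\sigma)}$ is always nonsingular (cf.\ \cite[\S 11]{BOI}), Remark~\ref{PD-PSD} lets me replace ``positive semidefinite'' by ``positive definite''.

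Next I use the hypothesis $T_{(A,\sigma)} \simeq \langle b_1,\ldots,b_m\rangle_\iota$ with all $b_i \in F^\times$. In the case where $\sigma$ is of the first kind this is just a diagonal quadratic form over $F$, and positive definiteness at $P$ is equivalent to each $b_i$ being positive at $P$. When $\sigma$ is of the second kind, $\langle b_1,\ldots,b_m\rangle_\iota$ is a hermitian form over $(K,\iota)$; because each $b_i$ lies in $F^\times = \Sym(K,\iota)^\times$, the associated hermitian form is again diagonalized with entries in $F^\times$, and its signature at $P$ (in the sense of hermitian forms over $(K,\iota)$) is the sum of the signs of the $b_i$ at $P$. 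In either case, $T_{(A,\sigma)}$ is positive definite at $P$ if and only if $b_i >_P 0$ for every $i \in \{1,\ldots,m\}$, which by definition of the Harrison set means exactly that $P \in H(b_1,\ldots,b_m)$.

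Assembling these equivalences yields $X_\sigma = H(b_1,\ldots,b_m)$. There is no real obstacle here; the only point requiring a moment of care is confirming that the ``positive definiteness equals all diagonal entries positive'' principle applies uniformly to both the symmetric bilinear case (first kind) and the hermitian case over $(K,\iota)$ (second kind), which is immediate once one notes that the $b_i$ lie in $F$.
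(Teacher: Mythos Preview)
Your proof is correct and follows essentially the same route as the paper's: unwind the definition of $X_\sigma$, use nonsingularity of $T_{(A,\sigma)}$ to pass from positive semidefinite to positive definite, and read off the diagonal entries. The paper's one-line proof cites Definition~\ref{P-S-pos}$(i)$ and \eqref{LTQ} where you cite Definition~\ref{P-S-pos}$(i)$ and Remark~\ref{PD-PSD}, but these serve the same purpose; your explicit separation of the first- and second-kind cases is a harmless elaboration of what the paper leaves implicit.
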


\begin{proof} It follows from Definition~\ref{P-S-pos}$(i)$ and \eqref{LTQ} that $P\in X_\s$ if and only if $b_i \in P$ for all $i=1,\ldots, m$. 
\end{proof}

We have now laid the ground work for proving our sums of hermitian squares version of \cite[Theorem~5.4]{P-S}:

\begin{thm}\label{pony}
Let $u\in \Sym(A,\s)$ and let $\tas\simeq \la b_1,\ldots, b_{m}\ra_\iota$ with all $b_i \in F^\x$.
The following statements are equivalent:
\begin{enumerate}[$(i)$]
\item $\qnd{u}_\s$ is $\eta$-maximal at all $P \in X_\s$, where $\eta$ is any tuple of reference forms for $(A,\s)$ of the form $(\qf{1}_\s, \ldots)$.
\item The form $\tasu$ is positive semidefinite at all  $P \in X_\s$.
\item $u \in \das (2^r \x \pf{b_1,\ldots, b_m} \ox \qf{1}_\s)$ for some $r \in \N$.
\end{enumerate}
\end{thm}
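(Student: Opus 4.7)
The plan is to split the theorem into $(i) \iff (iii)$ and $(i) \iff (ii)$.

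For $(i) \iff (iii)$: This is a direct application of Theorem~\ref{main_thm_2}. By Lemma~\ref{lemma_x}, $X_\s = H(b_1,\ldots,b_m)$, so I would apply Theorem~\ref{main_thm_2} with $a = 1$ and $Y = X_\s$. The required hypothesis that $1$ is $\eta$-maximal at every $P \in X_\s$ is immediate: Corollary~\ref{rainbow2} gives $\sign_P^\eta \qf{1}_\s = n_P$ (since $\eta$ starts with $\qf{1}_\s$ and $P \in X_\s$), and Remark~\ref{rainbow} gives $n_P = m_P$.

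For $(i) \iff (ii)$: One may assume $X_\s \neq \varnothing$, since both statements are otherwise vacuous. Using Proposition~\ref{Brauer_pos}, choose the involution $\vt$ on $D$ so that $X_\s \subseteq X_\vt$; Lemma~\ref{rescue} then forces $\ve = 1$ and excludes $(D,\vt,\ve) = (F,\id_F,-1)$. Fix $P \in X_\s$. Lemma~\ref{one} reduces positive semidefiniteness of $\tasu$ at $P$ to that of $T_{(M_\ell(D),\vt^t,\Phi^{-1}f(u))}$, and Lemma~\ref{two} reduces this further to positive semidefiniteness at $P$ of each $T_{(D,\vt,u_i)}$ for $i = 1,\ldots,k$, where the $u_i \in \Sym(D,\vt)^\times$ are the invertible diagonal entries produced by \eqref{iso}. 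Since $P \in X_\vt$ and each $u_i$ is invertible, Proposition~\ref{not_used}(i) applied to $(D,\vt)$ with a reference tuple $\eta'' = (\qf{1}_\vt,\ldots)$ — valid because Corollary~\ref{rainbow2} makes $1 \in D$ maximal at $P$ — identifies this with each $u_i$ being $\eta''$-maximal at $P$.

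The last step is to match the condition ``each $u_i$ is $\eta''$-maximal at $P$'' with $(i)$. Under the Morita equivalences of \eqref{diagram}, $\qnd{u}_\s$ (of rank $k$ over $A$) corresponds to $\qf{u_1,\ldots,u_k}_\vt$ over $(D,\vt)$, and signatures transfer by \cite[Theorem~4.2]{A-U-prime}. By Proposition~\ref{rank-sign}, $\eta$-maximality of $\qnd{u}_\s$ at $P$ amounts to $\sum_{i=1}^k \sign_P \qf{u_i}_\vt$ attaining its rank-$k$ maximum, which — since each summand is bounded in absolute value by the rank-$1$ maximum — is equivalent to each $u_i$ being maximal. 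The main obstacle is reconciling the sign conventions carried by $\eta$ on $(A,\s)$ and $\eta''$ on $(D,\vt)$; this is resolved via Remark~\ref{rainbow3}, which lets one assume $\Phi_P$ is positive definite at $P$, so that the Morita image of $\qf{1}_\s$ is a positively-signed form over $(D,\vt)$ and the two sign conventions align.
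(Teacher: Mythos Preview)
Your argument for $(i)\Leftrightarrow(iii)$ is correct and matches the paper's exactly.

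For the remaining equivalence, however, your approach diverges from the paper's and carries a genuine gap. You want to compare $\eta$-maximality of $\qnd{u}_\s$ with $\eta''$-maximality of the $u_i$, where $\eta'' = (\qf{1}_\vt,\ldots)$. The signature transfer \cite[Theorem~4.2]{A-U-prime} only matches $\sign_P^\eta$ with $\sign_P^\xi$, where $\xi$ is the \emph{Morita image} of $\eta$; it says nothing about your independently chosen $\eta''$. So you must show $\sign_P^\xi = \sign_P^{\eta''}$, equivalently $\sign_P^\xi \qf{1}_\vt > 0$. Your appeal to Remark~\ref{rainbow3} does not do this: that remark concerns the \emph{local} matrix $\Phi_P$ in $(A\ox_F F_P,\s\ox\id)\cong(M_{n_P}(D_P),\ad_{\Phi_P})$, whereas what you need is a statement about the \emph{global} $1$-dimensional form $\qf{1}_\vt$ over $(D,\vt)$. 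What you do know is that the Morita image of $\qf{1}_\s$ (an $\ell$-dimensional form over $(D,\vt)$ with Gram matrix $\Phi^{-1}$) has $\xi$-signature $n_P>0$; but this form is not $\qf{1}_\vt$, and its positivity does not obviously force $\sign_P^\xi\qf{1}_\vt>0$.

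The paper sidesteps this entirely. It proves $(iii)\Rightarrow(ii)$ by an elementary trace computation (writing $u$ as a weighted sum of hermitian squares and expanding $\Trd_A(\s(x)ux)$), and for $(ii)\Rightarrow(i)$ it uses $\xi$ rather than a new $\eta''$, accepts the resulting ambiguity $\delta\in\{-1,1\}$ from Proposition~\ref{not_used}, and then rules out $\delta=-1$ by contradiction: if $-\qnd{u}_\s$ were $\eta$-maximal at $P$, continuity of signature gives a Harrison neighbourhood of $P$ inside $X_\s$ on which this holds, Theorem~\ref{main_thm_2} then represents $-u$ as a weighted sum of hermitian squares, and the same trace computation forces $\tasu$ to be negative semidefinite at $P$, contradicting $(ii)$. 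This contradiction argument is precisely what replaces the sign-alignment step you left unjustified.
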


\begin{proof}  The equivalence between $(i)$ and $(iii)$ follows from Theorem~\ref{main_thm_2}.

$(iii)\Rightarrow (ii)$: Assume that 
\[u=   \sum_{e \in \{0,1\}^m} b^e     \sum_{i} \s(x_{i,e}) x_{i,e},\] 
where $b^e =b_1^{e_1}\cdots b_m^{e_m}$  and $x_{i,e}\in A$. Let $x\in A\setminus \{0\}$. Then
\[
\Trd_A (\s(x) u x ) = \sum_{e \in \{0,1\}^m} b^e     \sum_{i} \Trd_A(    \s(x_{i,e}x) x_{i,e}x )
\]
is nonnegative at all $P\in X_\s$ by definition of $X_\s$, \eqref{LTQ}, and Lemma~\ref{lemma_x}.

$(ii)\Rightarrow (i)$: The implication is trivially true if $X_\s=\varnothing$. 
Thus we assume $X_\s\not=\varnothing$. 
By Proposition~\ref{Brauer_pos} we may assume that $\vt$ is of the same type as $\s$  (in particular, $\ve=1$) and that 
$X_\s \subseteq X_\vt$. Let $\xi$ 
 be the tuple of reference forms for $(D,\vt)$, obtained from $\eta$ via the Morita equivalences in \eqref{diagram}. 
Let $P\in X_\s$. We have the following equivalences (with PD meaning positive definite and PSD meaning positive semi\-definite, as usual):
\begin{align*}
\tasu& \text{ is PSD at $P$} & \\
&\iff \text{$T_{(M_\ell(D), \vt^t, \Phi^{-1} f(u))}$ is PSD at $P$ [by Lemma~\ref{one}]}\\
&\iff \text{$T_{(D, \vt, u_i)}$ is PSD at $P$ for $i=1,\ldots, k$ [by Lemma~\ref{two} since $\ve=1$]}\\
&\iff \text{$T_{(D, \vt, u_i)}$ is PD at $P$ for $i=1,\ldots, k$ [since all $u_i$ are invertible]}\\
&\iff \exists \delta\in\{-1,1\} \text{ such that $\delta u_i$ is $\xi$-maximal at $P$  for $i=1,\ldots, k$}\\
& \qquad\qquad\qquad\qquad\qquad\qquad\qquad\qquad \text{[by Proposition~\ref{not_used} since $P\in X_\vt$]}\\
&\iff \exists \delta\in\{-1,1\} \text{ such that $\delta \qnd{u}_\s$ is $\eta$-maximal at $P$}.
\end{align*}
Assume for the sake of contradiction that $\delta=-1$. 
Thus 
\[P\in \{Q \in X_\s \mid -\qnd{u}_\s \text{ is $\eta$-maximal at }Q\},\]
 which is open in $X_F$ since the map
 $\sign^{\eta} \qnd{u}_\s:X_F \to \Z$ is continuous \cite[Theorem~7.2]{A-U-Kneb}. Therefore, there exist $c_1,\ldots, c_t \in F^\x$ such that $P\in H(c_1,\ldots, c_t) \subseteq \{Q \in X_\s \mid -\qnd{u}_\s \text{ is $\eta$-maximal at }Q\}$. 
Applying Theorem~\ref{main_thm_2}
with $Y=H(c_1,\ldots, c_t)$ and $a=1$  
then gives 
$-u \in \das  (2^s \x \pf{c_1,\ldots, c_t } \ox \qf{1}_\s)$ for some $s \in \N$. A trace computation as in the proof  of 
$(iii)\Rightarrow (ii)$ above 
then shows that the form $\tasu$ is negative semidefinite at $P$, contradiction.
\end{proof}

\subsection{A question of Procesi and Schacher}

Consider the following property:

\begin{description}
\item[(PS)] for every $u\in \Sym(A,\s)$, the form $\tasu$ is positive semidefinite at all  $P \in X_\s$ if and only if
$u \in \das (2^s \x  \qf{1}_\s)$ for some $s \in \N$.
\end{description}

In  \cite[p.~404]{P-S}, Procesi and Schacher, motivated by \cite[Theorem~5.4]{P-S},
 ask if property (PS) holds for all $F$-algebras with involution $(A,\s)$  
and give a positive answer for quaternion algebras 
\cite[Corollary~5.5]{P-S} and in the case where $X_\s=X_F$ 
\cite[Proposition~5.3]{P-S}. In \cite{K-U} an elementary counterexample is produced   to (PS) in general and some cases are studied where (PS) holds.
Our previous results yield a slight improvement on \cite[Proposition~5.3]{P-S}:  

\begin{cor}\label{canary} 
If $X_\s =\wt X_F$, then property (PS)  holds.
\end{cor}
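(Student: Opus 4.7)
The plan is to reduce (PS) to Theorem~\ref{main_thm_2} (or equivalently to Theorem~\ref{pony}) by observing that when $X_\s=\wt X_F$, the Pfister factor in the weighted sum of hermitian squares characterization becomes redundant. Both implications of (PS) will be handled quickly using the machinery already developed.

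For the easy direction, suppose $u\in\das(2^s\x \qf{1}_\s)$, so $u=\sum_{i=1}^{2^s} \s(x_i)x_i$ for some $x_i\in A$. For any $y\in A$, the identity
\[
\Trd_A\bigl(\s(y) u y\bigr)=\sum_i \Trd_A\bigl(\s(x_iy) x_iy\bigr)=\sum_i T_{(A,\s)}(x_iy,x_iy)
\]
is nonnegative at each $P\in X_\s$ by definition of $X_\s$, so $\tasu$ is positive semidefinite at every $P\in X_\s$. (This is just the computation at the end of the proof of Theorem~\ref{pony}, (iii)$\Rightarrow$(ii), specialized to the empty set of weights.)

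For the harder direction, assume $\tasu$ is positive semidefinite at all $P\in X_\s$, and fix a tuple of reference forms $\eta$ of the form $(\qf{1}_\s,\ldots)$, cf.\ Remark~\ref{ricola}. Theorem~\ref{pony}, (ii)$\Rightarrow$(i), tells us that $\qnd{u}_\s$ is $\eta$-maximal at every $P\in X_\s$. The key observation is that the hypothesis $X_\s=\wt X_F$ gives $X_F\sm X_\s=\Nil[A,\s]$, and at any nil-ordering every nonsingular hermitian form is trivially $\eta$-maximal because $m_P=0$. Hence $\qnd{u}_\s$ is in fact $\eta$-maximal at every $P\in X_F$.

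To conclude, I apply Theorem~\ref{main_thm_2} with $t=1$, $b_1=1$ (so $\pi=\pf{1}$ and $Y=H(1)=X_F$), and $a=1$. The hypothesis on $a$ is satisfied: at each $P\in X_\s$, Corollary~\ref{rainbow2} yields $\sign_P^\eta\qf{1}_\s=n_P=m_P$, so $\qf{1}_\s$ is $\eta$-maximal at $P$; at each $P\in\Nil[A,\s]$ this is automatic. The theorem then delivers $u\in\das(2^k\x \pf{1}\ox\qf{1}_\s)=\das(2^{k+1}\x\qf{1}_\s)$ for some $k\in\N$, which is exactly what (PS) requires. There is no real obstacle here: the only subtle point is recognizing that the assumption $X_\s=\wt X_F$ is precisely what one needs to promote ``$\eta$-maximal at every $P\in X_\s$'' to ``$\eta$-maximal at every $P\in X_F$'', thereby cleanly eliminating the Pfister weight that appears in Theorem~\ref{pony}.
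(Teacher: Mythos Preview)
Your proof is correct and follows essentially the same route as the paper: use Theorem~\ref{pony} to pass from positive semidefiniteness of $\tasu$ on $X_\s$ to $\eta$-maximality of $\qnd{u}_\s$, observe that the hypothesis $X_\s=\wt X_F$ makes this $\eta$-maximality hold on all of $X_F$ (trivially on nil-orderings), and conclude via Theorem~\ref{main_thm_2} with $a=1$ and $Y=H(1)=X_F$. The only cosmetic differences are that you spell out the easy direction by a direct trace computation rather than invoking Theorem~\ref{pony}(iii)$\Rightarrow$(ii), and that the equality $n_P=m_P$ you use when verifying $1$ is $\eta$-maximal should be attributed to Remark~\ref{rainbow} rather than Corollary~\ref{rainbow2}.
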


\begin{proof} Let $u\in \sas$ and let $\eta$ be a tuple of reference forms for $(A,\s)$
of the form $(\qf{1}_\s,\ldots)$. Then $\tasu$ is positive semidefinite on 
$X_\s = \wt X_F$ if and only if $\qnd{u}_\s$ is $\eta$-maximal at all $P \in \wt{X}_F$ (and, trivially, on $X_F$) by 
Theorem~\ref{pony}, which in 
turn is equivalent to $u \in \das (2^s \x  \qf{1}_\s)$ for some $s \in \N$ by Theorem~\ref{main_thm_2}
 with $a=1$ and $Y=H(1)$
and because $1$ is $\eta$-maximal on $X_F$.
\end{proof}

Consider the following variation on property (PS), where we enlarge the set of orderings on which positivity is verified
from $X_\s$ to $\wt X_F$:

\begin{description}
\item[(PS')] for every $u\in \Sym(A,\s)$, the form $\tasu$ is positive semidefinite at all  $P \in \wt X_F$ if and only if
$u \in \das (2^s \x  \qf{1}_\s)$ for some $s \in \N$.
\end{description}

We can use property (PS') to reformulate the question of Procesi and Schacher and obtain a full characterization of those $F$-algebras with involution for which (PS') holds:

\begin{thm}\label{biscuit} 
Property \textup{(}PS'\textup{)} holds if and only if $\wt X_F = X_\s$.
\end{thm}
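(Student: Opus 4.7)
The plan is to split the statement into two implications: one is essentially a verbatim restatement of Corollary~\ref{canary}, and the other follows from the short trick of applying property~(PS') to the single element $u=1$.

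Before starting, I would record the automatic inclusion $X_\s \subseteq \wt X_F$: if $P \in X_\s$ then Remark~\ref{rainbow} gives $m_P = n_P \geq 1$, so $P \notin \Nil[A,\s]$, i.e., $P \in \wt X_F$. The $(\Leftarrow)$ direction is then immediate, since assuming $\wt X_F = X_\s$ makes properties~(PS) and~(PS') coincide word for word, and Corollary~\ref{canary} already supplies~(PS).

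For the $(\Rightarrow)$ direction, assume (PS') holds and observe that $1 \in \sas$ lies in $\das(\qf{1}_\s) \subseteq \das(2^s \x \qf{1}_\s)$ for every $s \in \N$, simply because $\qf{1}_\s(1,1) = \s(1)\cdot 1 = 1$. Applying the right-to-left implication of~(PS') to $u = 1$ forces $T_{(A,\s,1)} = \tas$ to be positive semidefinite at every $P \in \wt X_F$, which by Definition~\ref{P-S-pos}(i) is precisely the inclusion $\wt X_F \subseteq X_\s$. Combined with $X_\s \subseteq \wt X_F$, this yields $\wt X_F = X_\s$.

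The only conceptual ``obstacle'' here is spotting that one should test (PS') on the most trivial symmetric element $u=1$, so as to extract information about the involution trace form $\tas$ itself; once this is noticed, no further input is required beyond Corollary~\ref{canary} and the definition of~$X_\s$.
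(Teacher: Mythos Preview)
Your proof is correct and follows essentially the same approach as the paper: the $(\Leftarrow)$ direction invokes Corollary~\ref{canary} after observing that (PS) and (PS') coincide, and the $(\Rightarrow)$ direction tests (PS') on $u=1$ to force $\tas$ to be positive semidefinite on $\wt X_F$. The only cosmetic difference is that you explicitly record the inclusion $X_\s \subseteq \wt X_F$ up front, whereas the paper leaves it implicit in the final equality.
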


\begin{proof}
Assume that $\wt X_F =X_\s$. Then (PS) equals (PS') and the conclusion follows from Corollary~\ref{canary}. Conversely, assume that (PS') holds. Since $1\in \das (\qf{1}_\s)$, the form $T_{(A,\s,1)}$ is positive semidefinite on 
$\wt X_F$ by (PS') and, since  $T_{(A,\s,1)}$ is nonsingular, it is in fact positive  definite on 
$\wt X_F$. It follows from \eqref{LTQ}
 that $\s=\s_1$ is positive on $\wt X_F$, i.e. $\wt X_F = X_\s$.
\end{proof}

\section*{Acknowledgement}

We thank University College Dublin for having provided us with the challenging environment in which the research presented in this paper was carried out.


\begin{small}

\def\cprime{$'$}

\textsc{School of Mathematics and Statistics, University College Dublin, Belfield,
Dublin~4, Ireland} 

\emph{E-mail address: } \texttt{vincent.astier@ucd.ie, thomas.unger@ucd.ie}

\end{small}

\end{document}